\documentclass[a4paper,draft,reqno,12pt]{amsart}
\usepackage[english]{babel}
\usepackage{amsmath}
\usepackage{amssymb}
\usepackage{amscd}
\usepackage{amsthm}
\usepackage{euscript}
\usepackage{tikz}
\newtheorem{proposition}{Proposition}
\newtheorem{lemma}{Lemma}
\newtheorem{theorem}{Theorem}

\theoremstyle{definition}

\newtheorem{example}{Example}

\theoremstyle{remark}
\newtheorem {remark}{Remark}

\usepackage{tikz}
\usetikzlibrary{calc}

\DeclareMathOperator{\Spec}{Spec}

\DeclareMathOperator{\Aut}{Aut}

\def\BG{{\mathbb G}}
\def\BK{{\mathbb K}}

\def\BG{{\mathbb G}}
\def\BF{{\mathbb F}}

\def\BK{{\mathbb K}}

\def\BZ{{\mathbb Z}}

\def\BQ{{\mathbb Q}}

\def\BP{{\mathbb P}}
\def\BA{{\mathbb A}}

\def\Cl{\mathrm{Cl}}

\def\FR{\mathfrak{R}}

\def\WX{\widehat{X}}
\def\FS{\mathfrak{S}}

\sloppy
\textwidth=16.3cm
\oddsidemargin=0cm
\topmargin=0cm
\headheight=0cm
\headsep=1cm
\textheight=23.5cm
\evensidemargin=0cm

\title[Unipotent group actions on toric varieties with a finite number of orbits]{Toric varieties admitting an action of a unipotent group with a finite number of orbits}

\thanks{The paper was supported by the grant RSF 23-71-01100}

\author{Anton Shafarevich}
\email{shafarevich.a@gmail.com}
\address{
Lomonosov Moscow State University, Faculty of Mechanics and Mathematics, Department of Higher Algebra, Leninskie Gory 1, Moscow, 119991 Russia;
\linebreak
and
\linebreak
HSE University, Faculty of Computer Science, Pokrovsky Boulevard 11, Moscow, 109028, Russia}

\subjclass[2020]{Primary 14L30, 14J70; Secondary 13E10.}
\keywords{Algebraic variety, algebraic group, unipotent group, toric variety.}
\begin{document}
\maketitle

\begin{abstract}
We describe complete simplicial toric varieties on which a unipotent group acts with a finite number of orbits. We also provide a complete list of such varieties in the cases when the dimension is equal to 2 or the divisor class group is $\BZ$.
\end{abstract}

\section{Introduction}

Let $X$ be a complete toric variety with an acting algebraic torus $T$ over an algebraically closed field $\BK$ of characteristic zero. The group of regular automorphisms $\Aut(X)$ is well studied. It was proved in \cite{Dem} that when $X$ is smooth, this group is a linear algebraic group. Later, in \cite{COX2}, this result was generalized using a new technique to simplicial complete toric varieties.

In this work, we describe complete simplicial toric varieties on which a unipotent group acts with a finite number of orbits. Each unipotent subgroup in $\Aut(X)$ is contained in a maximal unipotent subgroup. If a unipotent group acts with a finite number of orbits, then a maximal unipotent group acts with a finite number of orbits. In turn, since $\Aut(X)$ is a linear algebraic group, all maximal unipotent subgroups are conjugated. Therefore, a complete toric variety admits an action of some unipotent group with a finite number of orbits if and only if a maximal unipotent group acts with a finite number of orbits. So it suffices to describe complete simplicial toric varieties on which a maximal unipotent group acts with a finite number of orbits.

It is clear that if a unipotent group acts with a finite number of orbits, then one of the orbits is open in the Zariski topology. Complete toric  varieties on which a unipotent group acts with an open orbit are called \emph{radiant} and were described in \cite{AR}.

If we denote by $N$ the lattice of one-parameter subgroups of $T$, then there is a fan $\Sigma$ in $N$ which corresponds to $X$. Let $\Sigma(1) = \{\rho_1,\ldots, \rho_d\}$ be the set of rays in $\Sigma$ and $n_1, \ldots, n_d\in N$ be the primitive vectors on these rays. Then each ray $\rho$ corresponds to a prime $T$-invariant divisor $D_{\rho}$ on $X$. We denote by $[D_{\rho}]$ the class of $D_{\rho}$ in the divisor class group~$\Cl(X)$.

When the group $\Cl(X)$ is free, the correspondence 
$$\{n_{1},\ldots, n_d\} \to \{[D_{\rho_1}], \ldots, [D_{\rho_d}]\}$$ is the linear Gale duality and was studied in \cite{BH}. For each cone $\sigma \in \Sigma$ we denote by $\sigma(1)$ the set of all rays in $\sigma.$ Then let $\Gamma(\sigma)$ be the submonoid in $\Cl(X)$ generated by the set $\{[D_{\rho}] \mid \rho \notin \sigma(1) \}.$ Monoids $\Gamma(\sigma)$ were used in \cite{Bazh} to describe orbits of the automorphism group of complete toric varieties. We say that an abelian finitely generated monoid $\Gamma$ is \emph{free} if it is isomorphic to the monoid $(\BZ_{\geq 0})^k$ for some $k$. Note that a finitely generated submonoid $\Gamma$ in $(\mathbb{Z}_{\geq 0})^m$ for some $m\in \BZ_{>0}$ is free if and only if the set of irreducible elements in $\Gamma$ is $\BQ$-linearly independent.

Our main result is the following theorem.

\begin{theorem}\label{mainth}

     Let $X$ be a complete simplicial toric variety and $\Sigma$ be the corresponding fan. Then a maximal unipotent subgroup in $\Aut(X)$ acts on $X$ with a finite number of orbits if and only if $X$ is radiant and monoids $\Gamma(\sigma)$ are free for all $\sigma \in \Sigma.$
\end{theorem}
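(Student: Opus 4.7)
The plan is to work with the Cox realization $X=(\BA^d\setminus Z)/H$, where $H$ is the quasi-torus dual to $\Cl(X)$ and $Z$ is the irrelevant locus of $\Sigma$, together with the description of the automorphism group of a complete simplicial toric variety via root subgroups attached to Demazure roots, as developed in \cite{Dem, COX2}. Each such root subgroup corresponds to a locally nilpotent derivation $\mathbf{x}^{\mathbf{a}}\partial_{x_i}$ of the Cox ring, where $\mathbf{a}\in\BZ_{\geq 0}^d$ with $a_i=0$ and $\sum_j a_j[D_{\rho_j}]=[D_{\rho_i}]$ in $\Cl(X)$; a maximal unipotent subgroup $U\subset\Aut(X)$ is generated by all such root subgroups. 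The orbit structure of $\Aut(X)$ in terms of the monoids $\Gamma(\sigma)$ developed in~\cite{Bazh} will be the main combinatorial input.

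For the ``only if'' direction, the existence of an open $U$-orbit on the irreducible variety $X$ is immediate, which is the definition of radiance (see~\cite{AR}). For freeness of each $\Gamma(\sigma)$, I would combine the orbit description from~\cite{Bazh}, which attaches to each cone $\sigma$ the monoid $\Gamma(\sigma)$ and reads off the behavior of $\Aut(X)$-orbits through $V(\sigma)$, with the fact that $\Aut(X)$ is generated by $T$ and $U$, so that each $\Aut(X)$-orbit is a union of $T$-translates of $U$-orbits. This reduces finiteness of $U$-orbits to a combinatorial finiteness condition for the monoid $\Gamma(\sigma)$. A $\BQ$-linear dependence among the irreducible generators of $\Gamma(\sigma)$ should produce, by a weight argument on Cox monomials evaluated on a transverse slice to $V(\sigma)$, a one-parameter family of pairwise distinct $U$-orbits, contradicting finiteness; hence $\Gamma(\sigma)$ must be free.

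For the converse, assume $X$ is radiant and each $\Gamma(\sigma)$ is free. Radiance supplies an open $U$-orbit. On each affine chart $X_\sigma\subset X$, the freeness of $\Gamma(\sigma)$ means its generators are $\BQ$-linearly independent classes in $\Cl(X)$; this permits identifying the root subgroups that act nontrivially on $X_\sigma$ with a basis of a commutative unipotent group acting linearly on an affine space of dimension $\rank\Gamma(\sigma)$, whose orbit stratification is finite and indexed by coordinate subspaces. Covering $X$ by the finite family $\{X_\sigma\}_{\sigma\in\Sigma}$ and summing these local contributions produces the finiteness of $U$-orbits on $X$.

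The principal obstacle is making precise the slogan ``the $U$-action local to $V(\sigma)$ is governed by $\Gamma(\sigma)$'' and proving that freeness of this monoid is equivalent to finiteness of orbits of the associated unipotent action; this requires a careful interplay between the Gale-dual combinatorics of~\cite{BH}, the Demazure-root description of~\cite{COX2}, and the orbit description from~\cite{Bazh}, while controlling root subgroups that contribute simultaneously to several adjacent cones of $\Sigma$.
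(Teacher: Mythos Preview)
Your proposal identifies the right ingredients---the Cox cover, Demazure roots, and the monoids $\Gamma(\sigma)$---but it does not supply the mechanism that actually makes the argument work, and one of the key claims in the ``if'' direction is incorrect. You assert that on each chart $X_\sigma$ the root subgroups acting nontrivially can be identified with ``a basis of a commutative unipotent group acting linearly on an affine space of dimension $\rank\Gamma(\sigma)$, whose orbit stratification is finite and indexed by coordinate subspaces.'' But the maximal unipotent subgroup $U$ is in general non-commutative (root subgroups $U_e$ and $U_{e'}$ with $e\in\FR_{\rho}$, $e'\in\FR_{\rho'}$ and $\langle e,n_{\rho'}\rangle>0$ do not commute), and its restriction to a chart is not a linear action with a coordinate-subspace stratification. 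The Bazhov description you invoke gives $\Aut(X)$-orbits, not $U$-orbits; a single $\Aut(X)$-orbit through $V(\sigma)$ can break into infinitely many $U$-orbits, and nothing in your outline controls this. Your closing ``principal obstacle'' paragraph effectively concedes that the central step is missing.

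What the paper does instead is build an explicit $G_X\times U$-invariant stratification of the total coordinate space, not of the individual charts. After fixing a specific $U$ via an ordering of the positive rays (this choice matters), one calls a subset $A\subseteq\Sigma(1)$ \emph{basic} if $[D_\rho]\notin\Gamma(A\setminus\{\rho\})$ for every $\rho\in A$, and attaches to each basic $A$ a companion set $\widehat A$ and a locally closed piece $Z_A\subset\BA^d$. The core lemmas then establish that $\BA^d=\bigsqcup_A Z_A$, that each $Z_A$ is $G_X\times U$-invariant, and that $Z_A$ is a single $G_X\times U$-orbit exactly when $\{[D_\rho]:\rho\in A\}$ is $\BQ$-linearly independent; otherwise a nontrivial relation yields a Laurent monomial in the variables $x_\rho$, $\rho\in A$, which is a nonconstant $G_X\times U$-invariant on $Z_A$. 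One then determines which $Z_A$ lie in $\widehat X$ (namely those with $\widehat A\subseteq\sigma(1)$ for some $\sigma\in\Sigma$) and, via the notion of a \emph{minimal} basic subset, translates the independence condition into freeness of the monoids $\Gamma(\sigma)$. This global stratification, and the verification that the chosen $U$ really preserves each piece $Z_A$, is the substance of the proof; your chart-by-chart sketch does not produce it.
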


We also classify all such varieties in the cases when  $X$ is a surface (Proposition \ref{dim2}) and when $\Cl(X)$ is isomorphic to $\BZ$ (Proposition \ref{projspace}).

Let us briefly describe the idea of the proof of Theorem \ref{mainth}. For a complete toric variety $X$ with a fan $\Sigma$ one can consider the algebra $R(X) = \BK[x_{\rho} \mid \rho \in \Sigma(1)]$ which is called the \emph{Cox ring} of the toric variety $X.$ There is an action of the group $G_X = \mathrm{Hom}(\Cl(X), \BK^{\times})$ on $R(X)$ which is given by the formula:
$$g\circ x_{\rho} = g([D_{\rho}])x_{\rho}.$$
This action defines an action of $G_X$ on $\BA^d$ where $d = |\Sigma(1)|.$ The subset
 $$Z = \{(x_{\rho}) \in \BA^d \mid \prod_{\rho \notin \sigma(1)} x_\rho = 0,\ \forall \sigma \in \Sigma \}$$
is $G_X$-invariant. Denote by $\widehat{X}$ the set $\BA^d\setminus Z$. It was proven in \cite{COX2} that there is a regular map $\pi: \widehat{X} \to X$ which is a categorical quotient with respect to the $G_X$-action. Moreover, when $X$ is simplicial then $\pi$ is a geometric quotient. An action of a unipotent group $U$ on $X$ can be lifted to an action of the group $G_X\times U$ on $\BA^d$ such that $\widehat{X}$ is $G_X\times U$-invariant. Therefore, there are finitely many $U$-orbits on $X$ if and only if there are finitely many $G_X\times U$-orbits on $\widehat{X}.$

We choose some specific partial order $\prec$ on rays of $\Sigma$ and define a specific maximal unipotent group $U$ in $\Aut(X)$ with respect to this partial order; see Section \ref{mainsec}. Next, we give the definition of a \emph{basic} subset in $\Sigma(1)$. Namely, for each subset $A\subseteq \Sigma(1)$ one can consider the submonoid $\Gamma(A)$ in $\Cl(X)$ which is generated by the set $\{[D_{\rho}] \mid \rho \in A\}$ as monoid. We say that a subset $A\subseteq \Sigma(1)$ is \emph{basic} if $[D_{\rho}] \notin \Gamma(A\setminus\{\rho\})$ for all $\rho \in A.$

For each basic subset $A$ we define a subset $\widehat{A}$ consisting of all $\rho' \in \Sigma(1)$ such that $[D_{\rho'}]\notin \Gamma(A)$ or $[D_{\rho'}] = [D_{\rho}]$ for some $\rho \in A$ but $\rho' \prec \rho.$ Then for any basic subset $A \subseteq \Sigma(1)$ we define a subset 
$$Z_A = \{(x_{\rho_1},\ldots, x_{\rho_d}) \in \mathbb{A}^d \mid x_{\rho} \neq 0\ \text{for all} \ \rho \in A \ \text{and} \ x_{\rho'} =  0 \ \text{for all}\ \rho' \in \widehat{A} \} \subseteq \BA^d.$$

We show that each subset $Z_A$ is either a single $G_X\times U$-orbit or a union of infinitely many $G_X\times U$-orbits. The first case holds if and only if the set of irreducible elements in $\Gamma(A)$ is $\mathbb{Q}$-linearly independent. Then we show that $\BA^d$ is a disjoint union of the subsets $Z_A$ and give a criterion when $Z_A \subseteq \widehat{X}.$ Finally, we prove that all subsets $Z_A$ in $\widehat{X}$ are single $G_X\times U$-orbits if and only if all monoids $\Gamma(\sigma)$ are free. 

The structure of the text is as follows. In Section \ref{prelim} we recall the basic facts on toric varieties, automorphism group of complete toric varieties and radiant toric varieties. In Section \ref{mainsec} we prove the main result. Finally, in Section \ref{exampsec} we study the case of surfaces and toric varieties with $\Cl(X) \simeq \BZ$. We also describe the orbits of a maximal unipotent group on these varieties.

\subsection*{Data availability statement and conflict of interest statement}
The author declares that the data supporting the findings of this study are available within the paper. Also the author has no conflicts of interest to declare.

\section{Toric varieties}\label{prelim}
\subsection{Basic facts}
Let $T = (\BK^{\times})^n$ be an algebraic torus. A normal algebraic variety $X$ is called \emph{toric}, if there is a faithful action of $T$ on $X$ with an open orbit. Let $N \simeq \BZ^n$ be the lattice of one-parameter subgroups of $T$ and $M = \mathrm{Hom}(N, \BZ) \simeq \BZ^n$ be the dual lattice of characters of $T$. By $N_{\BQ}$ and $M_{\BQ}$ we mean $\BQ$-vector spaces $N\otimes_{\BZ} \BQ$ and $M\otimes_{\BZ}\BQ$ respectively. There is a natural pairing 
$$\langle \cdot, \cdot \rangle,\ M\times N \to \BZ,\ \langle m, n \rangle = m(n),\ \ m\in M,\ n\in N $$
which can be extended to a bilinear function $\langle \cdot, \cdot \rangle: M_\BQ\times N_\BQ \to \BQ.$

By \emph{cone} in $N_{\BQ}$ (or $M_{\BQ}$) we mean a convex polyhedral cone. We say that a cone is \emph{strongly convex} if it contains no non-zero linear subspaces. We recall that a finite set of strongly convex cones $\Sigma$ in $N_\BQ$ is called a \emph{fan} if the following conditions are satisfied:
\begin{enumerate}
    \item if $\sigma \in \Sigma$ then $\Sigma$ contains all faces of $\sigma$;
    \item if $\sigma_1, \sigma_2 \in \Sigma$ then the intersection $\sigma_1\cap \sigma_2$ is a common face of $\sigma_1$ and $\sigma_2$.
\end{enumerate}

It is well known that every toric variety $X$  has a corresponding fan $\Sigma$ in $N_\BQ$, and $X$ is uniquely determined by $\Sigma$. One can find basic facts on toric varieties in books \cite{COX} and \cite{FULTON}. It also well known that a toric variety $X$ is complete if and only if the corresponding fan is complete, that is
$$\bigcup_{\sigma \in \Sigma} \sigma = N_\BQ.$$

If $\sigma$ is a cone, then by $\sigma(1)$ we denote the set of rays of $\sigma.$ If $\Sigma$ is a fan, then by $\Sigma(1)$ we denote the set of rays in $\Sigma.$  

Each ray $\rho \in \Sigma(1)$ defines a prime $T$-invariant Weil divisor $D_\rho$ on the corresponding toric variety $X$. The divisors $D_\rho$  form a basis of the free abelian group $\mathrm{Div}_T(X)$~--- the group of $T$-invariant Weil divisors. Moreover, the divisor class group $\Cl(X)$ of $X$ is generated by the classes $[D_\rho].$

For each $\rho \in \Sigma(1)$ we denote by $n_\rho \in N$ the primitive vector on $\rho$. Then there is an exact sequence
\begin{equation}\label{ex1}
   0 \longrightarrow M \longrightarrow \mathrm{Div}_T(X) \longrightarrow \Cl(X) \longrightarrow 0, 
\end{equation}
where the second arrow is the map
$$m \to \sum_{\rho \in \Sigma(1)} \langle m, n_\rho \rangle D_\rho$$
and the third arrow is the map $D \to [D] \in \Cl(X).$

\subsection{Automorphism group.} In \cite{COX2} the automorphism group of complete toric varieties was described. We recall this description. Let $X$ be a complete toric variety and $\Sigma$ the corresponding complete fan. The \emph{Cox ring} of $X$ is the following algebra of polynomials

$$R(X) = \BK[x_{\rho} \mid \rho \in \Sigma(1)].$$
 Clearly, the Cox ring is the ring of regular functions on affine space $\BA^d$, where $d = |\Sigma(1)|$. The affine space $\BA^d$ is called \emph{total coordinate space}.

 There is $\Cl(X)$-grading on $R(X)$ where $\mathrm{deg} (x_{\rho}) = [D_\rho].$ We consider the group $G_X = \mathrm{Hom}(\Cl(X), \BK^{\times})$ which is a direct product of a torus and a finite abelian group. Then the $\Cl(X)$-grading on $R(X)$ defines an action of $G_X$ on $R(X):$

 $$g\circ x_{\rho} = g([D_{\rho}])x_{\rho},$$
 where $g \in G_X.$ There is a subset $Z \subseteq \BA^d$ which is given by the equations
 $$\prod_{\rho \notin \sigma(1)} x_\rho = 0,\ \sigma \in \Sigma.$$
It is $G_X$-invariant subset. We denote by $\widehat{X}$ the complement $\BA^d\setminus Z.$ The set $\widehat{X}$ is covered by affine open $G_X$-invariant subsets 
$$V_{\sigma} = \{(x_{\rho_1},\ldots, x_{\rho_d)} \in \mathbb{A}^d \mid \prod_{\rho \notin \sigma(1)}x_{\rho} \neq 0\},\ \sigma \in \Sigma.$$ 
At the same time, each cone $\sigma \in \Sigma$ corresponds to an affine $T$-invariant open subset $X_{\sigma} \subset X.$ It turns out that $X_{\sigma} \simeq \Spec[V_{\sigma}]^{G_X}$ and there is a categorical quotient $\pi_{\sigma}: V_{\sigma} \to X_{\sigma}$. The morphisms $\pi_{\sigma}$ and $\pi_{\sigma'}$ for $\sigma, \sigma' \in \Sigma$ coincide on the intersection $V_{\sigma}\cap V_{\sigma'}.$ So there is a morphism $\pi:\widehat{X} \to X$ which coincide with $\pi_{\sigma}$ on $V_{\sigma}$. The morphism $\pi$ is a categorical quotient so $\widehat{X}$ is a categorical quotient space $\WX/\!/G_X$.

We recall that a cone $\sigma$ is called \emph{simplicial} if primitive vectors on rays $\rho \in \sigma$ are linearly independent. The fan is simplicial if all cones in $\Sigma$ are simplicial. We say that a toric variety $X$ is simplicial if the corresponding fan is simplicial. When $X$ is simplicial then the quotient $\pi: \widehat{X} \to X$ is a geometric quotient.

Denote by $\mathrm{Aut}_g(R(X))$ the group of homogeneous automorphisms of $R(X)$ with respect to the $\Cl(X)$-grading. Here, by homogeneous automorphism we mean an automorphism that take homogeneous polynomials into homogeneous ones, possibly of a different degree. Then $G_X$ is a normal subgroup in $\mathrm{Aut}_g(R(X))$ and $\mathrm{Aut}_g(R(X))$ preserves $\WX.$ Therefore, each automorphism in $\mathrm{Aut}_g(R(X))$ defines an automorphism of $X$. Moreover, there is an exact sequence

$$1 \longrightarrow G_X \longrightarrow \mathrm{Aut}_g(R(X)) \longrightarrow \Aut(X) \longrightarrow 1,$$
where the second arrow is the embedding of $G_X$ in $\mathrm{Aut}_g(R(X))$ and the third arrow was described above.

One can describe the set of generators in $\Aut_g(R(X)).$ Firstly, there is an action of the torus $(\BK^{\times})^d$ on $R(X)$. Let $\Sigma(1) = \{\rho_1,\ldots, \rho_d\}$. Then the action of $(\BK^{\times})^d$ on $R(X)$ is given by the following rule:
$$(t_{\rho_1},\ldots, t_{\rho_d})\circ x_{\rho} = t_{\rho}x_{\rho},$$
where $(t_{\rho_1},\ldots, t_{\rho_d}) \in (\BK^{\times})^d.$  This torus contains $G_X$ and acts on $R(X)$ by homogeneous automorphisms. Therefore, it is embedded in $\Aut_g(R(X))$. The image of $(\BK^{\times})^d$ in $\Aut(X)$ is the torus $T$.

Further, for each $\rho \in \Sigma(1)$ we consider the set $\FR_{\rho} \subseteq M$ of vectors $e\in M$ such that
$$\langle e, n_{\rho}\rangle = -1, \ \text{and}\ \langle e, n_{\rho'}\rangle \geq 0 \ \text{for all} \ \rho'\neq \rho,\ \rho' \in \Sigma(1).$$
The elements of the set $\FR = \sqcup_{\rho} \FR_{\rho}$ are called \emph{Demazure roots} of $\Sigma.$ The roots in $\mathfrak{S} = \FR \cap (-\FR)$ are called \emph{semisimple} and all other roots  
 are called \emph{unipotent}. We denote the set of unipotent roots by $\mathfrak{U}.$

 Denote by $\BG_a$ the additive group of the field $\BK$. Then each Demazure root $e\in \FR_\rho$ defines a $\BG_a$-action on $R(X)$. One can consider the monomial 
 $$X_e = \prod_{\rho'\neq \rho}x^{\langle e, \rho'\rangle}_{\rho'}.$$
 Then an element $s\in \BG_a$ acts as follows:
 
 $$s\circ x_{\rho} = x_{\rho} + sX_e, \ s\circ x_{\rho'} = x_{\rho'},\ \forall \rho'\neq \rho.$$

 This action of $\BG_a$ on $R(X)$ defines a one-dimensional unipotent subgroup in $\Aut_g(R(X)).$ We denote this subgroup by $U_e$. The image of $U_e$ in $\Aut(X)$ defines a $\BG_a$-action on $X$ which is normalized by $T$.

 Finally, one can consider the group $\Aut(N, \Sigma)$ which consists of automorphisms of the lattice $N$ preserving $\Sigma.$ Then for each $\psi \in \Aut(N, \Sigma)$ we define the following automorphism of $R(X)$:
 $$\overline{\psi}(x_{\rho}) = x_{\psi(\rho)}, \ \forall \rho \in \Sigma(1).$$

 The map $\psi \to \overline{\psi}$ defines the embedding of $\Aut(N, \Sigma)$ into $\Aut_g(R(X)).$ Then $\Aut_g(R(X))$ is a linear algebraic group which is generated by the subgroups  $(\BK^{\times})^d$,  $U_e$ ($e\in \FR$) and $\Aut(N, \Sigma).$ Moreover, the subgroups $(\BK^{\times})^d$ and $U_e$ generate the connected component of identity of $\Aut_g(R(X))$. The subgroups $(\BK^{\times})^d$ and $U_e$ with $e\in \mathfrak{S}$ generate a maximal reductive subgroup and the subgroups $U_e$ with $e\in \mathfrak{U}$ generate the unipotent radical of $\Aut_g(R(X)).$ 

Now let us choose a vector $v\in N$ such that $\langle e, v\rangle \neq 0$ for all $e \in \mathfrak{S}.$ Then we can define the subset 
$$\FS^+ = \{e \in \FS \mid \langle e, v \rangle > 0\}.$$
Then the subgroups $U_e$ with $e\in \FS^+ \sqcup \mathfrak{U}$ generate a maximal unipotent subgroup in $\Aut_g(R(X))$ which is normalized by the torus $(\BK^{\times})^d.$

\subsection{Radiant toric varieties.} \label{rad} In \cite{AR} complete toric varieties were described that admit the action of a unipotent group with an open orbit. Here we recall this result. We say that a complete toric variety is \emph{radiant} if a maximal unipotent subgroup in $\Aut(X)$ acts on $X$ with an open orbit. We say that a fan $\Sigma$ in $N_\BQ$ is \emph{bilateral} if there is a basis $e_1, \ldots, e_n \in N$ such that the vectors $e_i$ generate rays $\varepsilon_i$ in $\Sigma(1)$ and all other rays $\tau_1,\ldots, \tau_k$  in $\Sigma(1)$ lie in the negative orthant with respect to this basis. We will call rays $\varepsilon_1, \ldots, \varepsilon_n$ \emph{positive} and rays $\tau_1,\ldots, \tau_k$ \emph{negative}. An example of a bilateral fan can be found in Figure \ref{Fig1}. 
\begin{figure}[h]
\begin{center}
\tikzset{every picture/.style={line width=0.75pt}} 

\begin{tikzpicture}[x=0.75pt,y=0.75pt,yscale=-1,xscale=1]

\draw [line width=1.5]    (490.33,149.33) -- (299.33,149.33) ;
\draw [line width=1.5]    (299.33,14.33) -- (299,151.17) ;
\draw [line width=1.5]    (106,245.17) -- (299,151.17) ;
\draw [line width=1.5]    (173,277.17) -- (299,151.17) ;
\draw [line width=1.5]    (299,151.17) -- (255.33,281) ;
\draw    (299,151.17) -- (324.34,149.47) ;
\draw [shift={(326.33,149.33)}, rotate = 176.16] [color={rgb, 255:red, 0; green, 0; blue, 0 }  ][line width=0.75]    (10.93,-3.29) .. controls (6.95,-1.4) and (3.31,-0.3) .. (0,0) .. controls (3.31,0.3) and (6.95,1.4) .. (10.93,3.29)   ;
\draw    (299,151.17) -- (299.31,124.33) ;
\draw [shift={(299.33,122.33)}, rotate = 90.66] [color={rgb, 255:red, 0; green, 0; blue, 0 }  ][line width=0.75]    (10.93,-3.29) .. controls (6.95,-1.4) and (3.31,-0.3) .. (0,0) .. controls (3.31,0.3) and (6.95,1.4) .. (10.93,3.29)   ;
\draw  [draw opacity=0] (56.33,14.33) -- (516.33,14.33) -- (516.33,285) -- (56.33,285) -- cycle ; \draw   (56.33,14.33) -- (56.33,285)(83.33,14.33) -- (83.33,285)(110.33,14.33) -- (110.33,285)(137.33,14.33) -- (137.33,285)(164.33,14.33) -- (164.33,285)(191.33,14.33) -- (191.33,285)(218.33,14.33) -- (218.33,285)(245.33,14.33) -- (245.33,285)(272.33,14.33) -- (272.33,285)(299.33,14.33) -- (299.33,285)(326.33,14.33) -- (326.33,285)(353.33,14.33) -- (353.33,285)(380.33,14.33) -- (380.33,285)(407.33,14.33) -- (407.33,285)(434.33,14.33) -- (434.33,285)(461.33,14.33) -- (461.33,285)(488.33,14.33) -- (488.33,285)(515.33,14.33) -- (515.33,285) ; \draw   (56.33,14.33) -- (516.33,14.33)(56.33,41.33) -- (516.33,41.33)(56.33,68.33) -- (516.33,68.33)(56.33,95.33) -- (516.33,95.33)(56.33,122.33) -- (516.33,122.33)(56.33,149.33) -- (516.33,149.33)(56.33,176.33) -- (516.33,176.33)(56.33,203.33) -- (516.33,203.33)(56.33,230.33) -- (516.33,230.33)(56.33,257.33) -- (516.33,257.33)(56.33,284.33) -- (516.33,284.33) ; \draw    ;

\draw (436.33,152.73) node [anchor=north west][inner sep=0.75pt]    {$\varepsilon _{1}$};
\draw (278,16.4) node [anchor=north west][inner sep=0.75pt]    {$\varepsilon _{2}$};
\draw (262,264.4) node [anchor=north west][inner sep=0.75pt]    {$\tau _{1}$};
\draw (184,264.4) node [anchor=north west][inner sep=0.75pt]    {$\tau _{2}$};
\draw (120,204.4) node [anchor=north west][inner sep=0.75pt]    {$\tau _{3}$};
\draw (311,157.4) node [anchor=north west][inner sep=0.75pt]    {$e_{1}$};
\draw (274,127.57) node [anchor=north west][inner sep=0.75pt]    {$e_{2}$};
\draw (409.33,44.73) node [anchor=north west][inner sep=0.75pt]    {$N_{\mathbb{Q}}$};

\end{tikzpicture}
\end{center}
\caption{Example of bilateral fan}\label{Fig1}

\end{figure}

\begin{theorem}\cite[Theorem 3]{AR}
Let $X$ be a complete toric variety and $\Sigma$ is the corresponding fan. The following conditions are equivalent:

\begin{enumerate}
    \item the variety $X$ is radiant;
    \item the fan $\Sigma$ is bilateral.
\end{enumerate}
 
\end{theorem}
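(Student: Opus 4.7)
The plan is to lift a maximal unipotent subgroup $U \subseteq \Aut(X)$ to an action of $G_X \times U$ on $\BA^d$ and reduce the theorem to counting $G_X \times U$-orbits on $\widehat{X}$. The necessity of $X$ being radiant is immediate, since finitely many orbits force an open one, so by the theorem of Section \ref{rad} I may assume that $\Sigma$ is bilateral, with positive rays $\varepsilon_1,\ldots,\varepsilon_n$ and negative rays $\tau_1,\ldots,\tau_k$. I would use the bilateral structure to fix a partial order $\prec$ on $\Sigma(1)$ and the specific maximal unipotent subgroup $U$ generated by the $U_e$ with $e\in\FS^+\sqcup\mathfrak{U}$, as sketched in the introduction.

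The next step is to establish the decomposition $\BA^d=\bigsqcup_{A\text{ basic}}Z_A$ announced in the introduction. For every $(x_\rho)\in\BA^d$ there should be a unique basic $A$ with $x_\rho\neq 0$ precisely on $A$ and $x_{\rho'}=0$ on $\widehat{A}$, obtained by taking the support and then discarding ``redundant'' rays using relations in $\Gamma$, with ties broken by $\prec$. Once this partition is in place, $Z_A\subseteq\widehat{X}$ translates directly, via the defining equations of $Z$, into the combinatorial condition that $A$ is contained in the complement of the rays of some cone $\sigma\in\Sigma$, i.e.\ $A\subseteq\Sigma(1)\setminus\sigma(1)$.

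The heart of the proof is the analysis of the $G_X\times U$-action on a single $Z_A$. Using the explicit Demazure-root formula $s\circ x_\rho=x_\rho+sX_e$ together with the choice of $\prec$, one expects the unipotent part of $G_X\times U$ to act transitively on the coordinates indexed by $\Sigma(1)\setminus(A\cup\widehat{A})$: by the very design of $\widehat{A}$, the relevant monomials $X_e$ are nonvanishing functions of the $A$-coordinates, so repeated application of the $U_e$ fills in any desired values on those rays inductively along $\prec$. The remaining freedom lies on the coordinates indexed by $A$, on which only $G_X$ acts by scaling via the characters $[D_\rho]$, and this action is transitive on $(\BK^{\times})^{|A|}$ precisely when $\{[D_\rho]:\rho\in A\}$ is $\BQ$-linearly independent, i.e.\ when $\Gamma(A)$ is free; otherwise any relation $\sum a_\rho[D_\rho]=0$ produces the continuous invariant $\prod x_\rho^{a_\rho}$ and yields a one-parameter family of orbits. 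The main obstacle is to verify that the monomial invariants coming from relations in $\Gamma(A)$ are the \emph{only} obstruction to transitivity, so that freeness of $\Gamma(A)$ genuinely suffices to give a single orbit; this will require a careful induction along $\prec$ together with a bookkeeping argument involving the basic subset structure.

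To conclude, I would match the two conditions ``$\Gamma(A)$ is free for every basic $A$ with $Z_A\subseteq\widehat{X}$'' and ``$\Gamma(\sigma)$ is free for every $\sigma\in\Sigma$''. Both directions use the criterion from the introduction that a finitely generated submonoid of $\BZ_{\geq 0}^m$ is free iff its irreducible generators are $\BQ$-linearly independent: the irreducible generators of $\Gamma(\sigma)$ form a basic subset of $\Sigma(1)\setminus\sigma(1)$, while any basic $A\subseteq\Sigma(1)\setminus\sigma(1)$ inherits $\BQ$-linear independence from $\Gamma(\sigma)$. Combining this with the decomposition step and the radiance condition yields the theorem.
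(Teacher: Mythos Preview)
Your proposal is not a proof of the stated theorem at all. The statement you were asked to prove is the Arzhantsev--Romaskevich result that a complete toric variety is radiant if and only if its fan is bilateral; this is cited in the paper as \cite[Theorem~3]{AR} and is quoted without proof. What you have written is instead a proof sketch for Theorem~\ref{mainth}, the paper's own main result about when a maximal unipotent subgroup acts with finitely many orbits. You even invoke ``the theorem of Section~\ref{rad}'' (which \emph{is} the statement under discussion) as an input, so your argument is circular with respect to the task at hand.

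Setting that aside, if your goal were really Theorem~\ref{mainth}, your outline is close to the paper's approach but contains an error in the criterion for $Z_A\subseteq\widehat{X}$: the correct condition (Lemma~\ref{lem4}(2)) is that $\widehat{A}\subseteq\sigma(1)$ for some cone $\sigma$, not that $A\subseteq\Sigma(1)\setminus\sigma(1)$. These are genuinely different, since points of $Z_A$ may have nonzero coordinates outside $A$ (on $\Sigma(1)\setminus(A\cup\widehat{A})$), and membership in $\widehat{X}$ is governed by where the coordinates vanish, namely on $\widehat{A}$. This matters in the final matching step, where one needs $\widehat{A^0}\subseteq\sigma(1)$ to find a face $\tau\preceq\sigma$ with $\tau(1)=\widehat{A^0}$ and conclude $\Gamma(A)=\Gamma(\tau)$.
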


When $X$ is a radiant complete toric variety then the divisor class group $\Cl(X)$ is a free finitely generated abelian group with the basis $[D_{\tau_1}],\ldots, [D_{\tau_k}];$ see \cite[Corollary 3]{Ds2}.  
 
The structure of maximal unipotent subgroups in $\Aut(X)$ of a radiant complete toric variety $X$ was studied in \cite{APS}.

\section{Simplicial toric varieties admitting an action of a unipotent group with a finite number of orbits}\label{mainsec}

In this section we describe complete simplicial toric varieties on which a maximal unipotent group acts with a finite number of orbits.

Let $X$ be a complete radiant toric variety and $\Sigma$ corresponding bilateral complete fan. Let $\varepsilon_1,\ldots, \varepsilon_n$ be the positive rays in $\Sigma(1)$ and $\tau_1,\ldots, \tau_k$ be the negative rays. Then the divisor class group $\Cl(X)$ is a free finitely generated abelian group with basis $[D_{\tau_1}],\ldots, [D_{\tau_k}];$ see \cite[Corollary 3]{Ds2}. 

If we denote $e_1,\ldots, e_n \in N$ the primitive vectors on $\varepsilon_1,\ldots, \varepsilon_n$ and by $n_{1}, \ldots, n_{k} \in N$ the primitive vectors on the rays $\tau_1,\ldots, \tau_k$  then $$[D_{\varepsilon_i}] = -\sum_{j}\langle e^i, n_{j}\rangle[D_{\tau_j}],$$
where $e^1,\ldots, e^n \in M$ is the dual basis to $e_1,\ldots, e_n$. Consider a submonoid $\Cl^+(X)$ in $\Cl(X)$ generated by $[D_{\tau_1}],\ldots, [D_{\tau_k}].$ Then $\Cl^+(X) \simeq (\BZ_{\geq 0})^k$ and $[D_{\varepsilon_i}] \in \Cl^+(X)$ for all $i$. 

\begin{example}
    Let $X$ be a complete toric variety corresponding to the bilateral fan shown in Figure \ref{Fig2}
\begin{figure}[h]
\begin{center}
\tikzset{every picture/.style={line width=0.75pt}} 

\begin{tikzpicture}[x=0.75pt,y=0.75pt,yscale=-1,xscale=1]

\draw  [draw opacity=0][line width=0.75]  (152.33,96) -- (481.33,96) -- (481.33,302) -- (152.33,302) -- cycle ; \draw  [color={rgb, 255:red, 0; green, 0; blue, 0 }  ,draw opacity=0.55 ][line width=0.75]  (152.33,96) -- (152.33,302)(193.33,96) -- (193.33,302)(234.33,96) -- (234.33,302)(275.33,96) -- (275.33,302)(316.33,96) -- (316.33,302)(357.33,96) -- (357.33,302)(398.33,96) -- (398.33,302)(439.33,96) -- (439.33,302)(480.33,96) -- (480.33,302) ; \draw  [color={rgb, 255:red, 0; green, 0; blue, 0 }  ,draw opacity=0.55 ][line width=0.75]  (152.33,96) -- (481.33,96)(152.33,137) -- (481.33,137)(152.33,178) -- (481.33,178)(152.33,219) -- (481.33,219)(152.33,260) -- (481.33,260)(152.33,301) -- (481.33,301) ; \draw  [color={rgb, 255:red, 0; green, 0; blue, 0 }  ,draw opacity=0.55 ][line width=0.75]   ;
\draw [line width=1.5]    (316.33,96) -- (316.33,178.33) ;
\draw [line width=1.5]    (480.33,178) -- (316.33,178.33) ;
\draw [line width=1.5]    (316.33,301.33) -- (316.33,178.33) ;
\draw [line width=1.5]    (316.33,178.33) -- (256.33,300) ;
\draw [line width=2.25]    (316.33,178.33) -- (353.33,178.33) ;
\draw [shift={(357.33,178.33)}, rotate = 180] [color={rgb, 255:red, 0; green, 0; blue, 0 }  ][line width=2.25]    (17.49,-5.26) .. controls (11.12,-2.23) and (5.29,-0.48) .. (0,0) .. controls (5.29,0.48) and (11.12,2.23) .. (17.49,5.26)   ;
\draw [line width=2.25]    (316.33,178.33) -- (316.33,141.33) ;
\draw [shift={(316.33,137.33)}, rotate = 90] [color={rgb, 255:red, 0; green, 0; blue, 0 }  ][line width=2.25]    (17.49,-5.26) .. controls (11.12,-2.23) and (5.29,-0.48) .. (0,0) .. controls (5.29,0.48) and (11.12,2.23) .. (17.49,5.26)   ;
\draw [line width=2.25]    (316.33,178.33) -- (316.33,215.33) ;
\draw [shift={(316.33,219.33)}, rotate = 270] [color={rgb, 255:red, 0; green, 0; blue, 0 }  ][line width=2.25]    (17.49,-5.26) .. controls (11.12,-2.23) and (5.29,-0.48) .. (0,0) .. controls (5.29,0.48) and (11.12,2.23) .. (17.49,5.26)   ;
\draw [line width=2.25]    (316.33,178.33) -- (277.12,256.76) ;
\draw [shift={(275.33,260.33)}, rotate = 296.57] [color={rgb, 255:red, 0; green, 0; blue, 0 }  ][line width=2.25]    (17.49,-5.26) .. controls (11.12,-2.23) and (5.29,-0.48) .. (0,0) .. controls (5.29,0.48) and (11.12,2.23) .. (17.49,5.26)   ;

\draw (448,105.4) node [anchor=north west][inner sep=0.75pt]    {$N_{\mathbb{Q}}$};
\draw (359.33,181.73) node [anchor=north west][inner sep=0.75pt]    {$e_{1}$};
\draw (288.33,140.73) node [anchor=north west][inner sep=0.75pt]    {$e_{2}$};
\draw (318.33,222.73) node [anchor=north west][inner sep=0.75pt]    {$n_{2}$};
\draw (277.33,263.73) node [anchor=north west][inner sep=0.75pt]    {$n_{1}$};
\draw (456,181.4) node [anchor=north west][inner sep=0.75pt]    {$ \begin{array}{l}
\varepsilon _{1}\\
\end{array}$};
\draw (292,96.4) node [anchor=north west][inner sep=0.75pt]    {$ \begin{array}{l}
\varepsilon _{2}\\
\end{array}$};
\draw (236,274.4) node [anchor=north west][inner sep=0.75pt]    {$ \begin{array}{l}
\tau _{1}\\
\end{array}$};
\draw (320,270.4) node [anchor=north west][inner sep=0.75pt]    {$ \begin{array}{l}
\tau _{2}\\
\end{array}$};

\end{tikzpicture}
\end{center}
\caption{The fan of $X$.}\label{Fig2}
\end{figure}
\begin{figure}[h]
\begin{center}
\tikzset{every picture/.style={line width=0.75pt}} 

\begin{tikzpicture}[x=0.75pt,y=0.75pt,yscale=-1,xscale=1]

\draw  [draw opacity=0] (182.33,51) -- (472.33,51) -- (472.33,231.83) -- (182.33,231.83) -- cycle ; \draw  [color={rgb, 255:red, 128; green, 128; blue, 128 }  ,draw opacity=1 ] (182.33,51) -- (182.33,231.83)(218.33,51) -- (218.33,231.83)(254.33,51) -- (254.33,231.83)(290.33,51) -- (290.33,231.83)(326.33,51) -- (326.33,231.83)(362.33,51) -- (362.33,231.83)(398.33,51) -- (398.33,231.83)(434.33,51) -- (434.33,231.83)(470.33,51) -- (470.33,231.83) ; \draw  [color={rgb, 255:red, 128; green, 128; blue, 128 }  ,draw opacity=1 ] (182.33,51) -- (472.33,51)(182.33,87) -- (472.33,87)(182.33,123) -- (472.33,123)(182.33,159) -- (472.33,159)(182.33,195) -- (472.33,195)(182.33,231) -- (472.33,231) ; \draw  [color={rgb, 255:red, 128; green, 128; blue, 128 }  ,draw opacity=1 ]  ;
\draw  [fill={rgb, 255:red, 0; green, 0; blue, 0 }  ,fill opacity=1 ] (322,159) .. controls (322,156.84) and (323.79,155.08) .. (326,155.08) .. controls (328.21,155.08) and (330,156.84) .. (330,159) .. controls (330,161.16) and (328.21,162.92) .. (326,162.92) .. controls (323.79,162.92) and (322,161.16) .. (322,159) -- cycle ;
\draw  [fill={rgb, 255:red, 0; green, 0; blue, 0 }  ,fill opacity=1 ] (286,123) .. controls (286,120.84) and (287.79,119.08) .. (290,119.08) .. controls (292.21,119.08) and (294,120.84) .. (294,123) .. controls (294,125.16) and (292.21,126.92) .. (290,126.92) .. controls (287.79,126.92) and (286,125.16) .. (286,123) -- cycle ;
\draw  [fill={rgb, 255:red, 0; green, 0; blue, 0 }  ,fill opacity=1 ] (358,123) .. controls (358,120.84) and (359.79,119.08) .. (362,119.08) .. controls (364.21,119.08) and (366,120.84) .. (366,123) .. controls (366,125.16) and (364.21,126.92) .. (362,126.92) .. controls (359.79,126.92) and (358,125.16) .. (358,123) -- cycle ;
\draw    (290,51) -- (290,159) ;
\draw    (290,159) -- (470.33,159) ;

\draw (279,159.4) node [anchor=north west][inner sep=0.75pt]    {$0$};
\draw (328,162.4) node [anchor=north west][inner sep=0.75pt]    {$[ D_{\tau _{1}}] \ =\ [ D_{\varepsilon _{1}}]$};
\draw (243,128.4) node [anchor=north west][inner sep=0.75pt]    {$[ D_{\tau _{2}}]$};
\draw (364,126.4) node [anchor=north west][inner sep=0.75pt]    {$[ D_{\varepsilon _{2}}]$};
\draw (396,66.4) node [anchor=north west][inner sep=0.75pt]    {$\mathrm{Cl}( X)$};

\end{tikzpicture}
\end{center}
\caption{Classes of prime T-invariant divisors.}\label{Fig3}
\end{figure}

Then the divisors $[D_{\tau_1}]$ and $[D_{\tau_2}]$ form a basis of $\Cl(X) \simeq \BZ^2$ and $[D_{\varepsilon_1}] = [D_{\tau_2}],\ [D_{\varepsilon_2}] = 2[D_{\tau_1}] + [D_{\tau_2}];$ see Figure \ref{Fig3}.

\end{example}

Let $R$ be the Cox ring of $X$ and $G_X = \mathrm{Hom}(\Cl(X), \BK^*)$. We choose a vector $v\in N$ such that
$$0> \langle e^1, v\rangle > \langle e^{2}, v\rangle  > \ldots > \langle e^n, v \rangle . $$
Denote by $U$ the corresponding maximal unipotent subgroup in $\Aut_g(R)$; see Section~\ref{prelim}. 

For positive rays we write $\varepsilon_i < \varepsilon_j$ if and only if $i<j$. Then we can define the following partial order on $\Sigma(1)$. We write that $ \rho \prec \rho'$ for $\rho,\rho' \in \Sigma(1)$ if and only if one of the following holds:

\begin{enumerate}
    
    \item $[D_{\rho}] = [D_{\rho'}]$ and $\rho, \rho'$ are positive rays with $\rho < \rho';$  
    \item $[D_{\rho}] = [D_{\rho'}]$ and $\rho$ is negative and $\rho'$ is positive.
\end{enumerate}

\begin{lemma}\label{lem1}
\begin{enumerate}
    \item We have $\varepsilon_i \prec \varepsilon_j$ if and only if the element  $e = e^i - e^j \in M$ is a Demazure root in $\FR_{\varepsilon_j}$ with $U_e \subseteq U$ and $\langle e, n_{l}\rangle = 0$  for all $l = 1,\ldots, k$;
    \item We have $\tau_i \prec \varepsilon_j $ if and only if   the element $e =  -e^j$ is a Demazure root in $\FR_{\varepsilon_j}$ with $U_e \subseteq U$ such that $\langle e, n_{l}\rangle = 0$ for all  $l \neq i$ and $\langle e, n_i \rangle = 1.$ 
    \item We have $\rho \prec \rho'$ for some $\rho,\rho' \in \Sigma(1)$ if and only if there is a unique semisimple Demazure root $e \in \FR_{\rho'}$ with $U_e \subseteq U$ such that 
    \begin{equation}\label{eqlem}
    \langle e, n_\rho \rangle = 1,\ \langle e, n_{\rho'} \rangle = -1\  \text{and}\ \langle e, n_{\rho''}\rangle\ = 0\ \text{for all}\ \rho'' \neq \rho, \rho'.\end{equation}
\end{enumerate}    
\begin{proof}
    (1) Suppose $\varepsilon_i \prec \varepsilon_j$. Then $[D_{\varepsilon_i}] = [D_{\varepsilon_j}]$ and $i<j$. But at the same time
    $$[D_{\varepsilon_i}] - [D_{\varepsilon_j}]= -\sum_{l}\langle e^i, n_{l}\rangle[D_{\tau_l}] + \sum_{l}\langle e^j, n_{l}\rangle[D_{\tau_l}] = -\sum_{l}\langle e^i - e^j, n_{l}\rangle[D_{\tau_l}] = 0.$$
    So $\langle e^i - e^j, n_l\rangle = 0$ for all $l = 1,\ldots, k.$ 
    
   We have $\langle e^i - e^j, e_l \rangle \geq 0$ when $l \neq j$ and $\langle e^i - e^j, e_j \rangle = -1$. Therefore, $e = e^i - e^j$ is a semisimple Demazure root. At the same time
    $$\langle e^i - e^j, v \rangle = \langle e^i, v \rangle - \langle e^j, v \rangle > 0$$
    since $i<j$. So $U_e \subseteq U.$ 

    Conversely, if $e = e^i - e^j$ is a Demazure root with $U_e\subseteq U$ and $\langle e, n_l \rangle = 0$ for all $l = 1,\ldots, k$ then 
    $$\sum_{\rho \in \Sigma(1)}\langle e, n_{\rho}\rangle D_{\rho} = D_{\varepsilon_i} - D_{\varepsilon_j}.$$
    From exact sequence \ref{eq1} we obtain that $[D_{\varepsilon_i}] = [D_{\varepsilon_j}].$ Since $U_e\subseteq U$ we have 
    $$\langle e, v \rangle = \langle e^i, v \rangle - \langle e^j, v \rangle > 0.$$
    It implies $i<j.$

    (2) Now suppose that $\tau_i \prec \varepsilon_j$. Then $[D_{\tau_i}] = [D_{\varepsilon_j}]$ so 
    
    $$[D_{\tau_i}] - [D_{\varepsilon_j}]= [D_{\tau_i}] + \sum_{l}\langle e^j, n_{l}\rangle[D_{\tau_l}] = 0.$$
    It implies that $\langle -e^j, n_l \rangle = 0$ when $l \neq i$ and $\langle -e^j, n_i\rangle = 1.$ Hence, $e = -e^j$ is a Demazure root. Since $\langle -e^j, v\rangle > 0 $ we have $U_e \subseteq U.$ 
    
    Conversely, suppose that $e = -e^j$ is a Demazure root with $U_e \subseteq U$ such that $\langle e, n_l\rangle = 0$ when $l\neq i$ and $\langle e, n_i \rangle = 1.$ Then
    $$[D_{\varepsilon_j}] = -\sum_{j}\langle e^i, n_{j}\rangle[D_{\tau_j}] = [D_{\tau_i}].$$

    (3) Since the fan $\Sigma$ is complete equations (\ref{eqlem}) define an element in $N$ uniquely. It is easy to check that if $e$ is a Demazure root from points (1) or (2), then $-e$ is also a Demazure root. So this part follows from parts (1) and (2).

\end{proof}

\end{lemma}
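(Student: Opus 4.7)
The plan is to reduce each of the three parts to a direct linear-algebraic computation using three ingredients: (i) the exact sequence (\ref{ex1}), which lets us read off when two classes in $\Cl(X)$ coincide from the pairings with the $n_\rho$; (ii) the explicit expansion $[D_{\varepsilon_i}] = -\sum_l \langle e^i, n_l\rangle [D_{\tau_l}]$ in the free basis $[D_{\tau_1}], \ldots, [D_{\tau_k}]$ of $\Cl(X)$; and (iii) the duality $\langle e^p, e_q\rangle = \delta_{pq}$, which trivializes the Demazure-root inequalities against the positive rays. The membership $U_e \subseteq U$ will in every case boil down to a single inequality $\langle e, v\rangle > 0$, which I interpret through the chosen ordering $0 > \langle e^1, v\rangle > \cdots > \langle e^n, v\rangle$.

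For part (1), I would translate $\varepsilon_i \prec \varepsilon_j$ into the pair of conditions $[D_{\varepsilon_i}] = [D_{\varepsilon_j}]$ and $i < j$. Using (ii) together with the freeness of $\{[D_{\tau_l}]\}$, the equality of classes is equivalent to $\langle e^i - e^j, n_l\rangle = 0$ for every $l$. Setting $e := e^i - e^j$, duality gives $\langle e, e_q\rangle = \delta_{iq} - \delta_{jq}$, which is $-1$ at $q = j$ and nonnegative elsewhere, hence $e \in \FR_{\varepsilon_j}$. Finally, $\langle e, v\rangle > 0$ is just $i < j$ by the choice of $v$, and the converse reads the same chain backwards using (i). Part (2) runs in parallel: expanding $[D_{\tau_i}] - [D_{\varepsilon_j}] = 0$ in the free basis yields $\langle -e^j, n_l\rangle = \delta_{il}$, duality again gives the root conditions for $-e^j \in \FR_{\varepsilon_j}$, and $\langle -e^j, v\rangle > 0$ is automatic because $\langle e^j, v\rangle < 0$ by the choice of $v$.

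Part (3) assembles these. The forward direction is immediate from (1) and (2); semisimplicity is verified by writing down $-e$ as an explicit root (namely $e^j - e^i \in \FR_{\varepsilon_i}$ in case (1), and $e^j \in \FR_{\tau_i}$ in case (2), via the same duality and formula), and uniqueness follows because the $n_\rho$ span $N_\BQ$ by completeness of $\Sigma$, so (\ref{eqlem}) admits at most one solution in $M_\BQ$. The main obstacle is the converse: a semisimple $e \in \FR_{\rho'}$ satisfying (\ref{eqlem}) with $U_e \subseteq U$ must come from one of the two cases in the definition of $\prec$, and in particular $\rho'$ must be positive. I would rule out $\rho' = \tau_j$ by a case analysis on $\rho$: uniqueness forces $e = e^i$ when $\rho = \varepsilon_i$, and then $\langle e, v\rangle = \langle e^i, v\rangle < 0$ contradicts $U_e \subseteq U$; uniqueness forces $e = 0$ when $\rho = \tau_i$, which is not a root. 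Once $\rho'$ is positive, the explicit form of $e$ from uniqueness places us back in the scope of parts (1) or (2), which closes the argument.
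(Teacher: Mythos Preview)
Your proposal is correct and follows essentially the same route as the paper: both arguments run through the exact sequence~(\ref{ex1}), the explicit expansion $[D_{\varepsilon_i}] = -\sum_l \langle e^i, n_l\rangle [D_{\tau_l}]$ in the free basis of $\Cl(X)$, the duality $\langle e^p, e_q\rangle = \delta_{pq}$, and the sign of $\langle e, v\rangle$ to control membership in $U$. Your treatment of part~(3) is in fact more explicit than the paper's: the paper records uniqueness via completeness and checks semisimplicity of $-e$, then simply asserts that ``this part follows from parts (1) and (2)'', whereas you spell out the converse by a case analysis ruling out $\rho' = \tau_j$ (using $\langle e^i, v\rangle < 0$ when $\rho = \varepsilon_i$, and $e = 0$ when $\rho = \tau_i$), which is exactly the missing step one would otherwise have to supply.
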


For a subset $A \subseteq \Sigma(1)$ we can consider the submonoid $\Gamma(A)$ in $\Cl^+(X)$ generated by the set $\{[D_{\rho}] \mid \rho \in A\}.$ We say that a subset $A \subseteq \Sigma(1)$ is \emph{basic} if $[D_{\rho}] \notin \Gamma(A\setminus\{\rho\})$ for all $\rho \in A.$ 

For a basic subset $A$ we can consider the subset $\widehat{A}$ consisting of all $\rho' \in \Sigma(1)$ that satisfy one of the following two  conditions:
\begin{enumerate}
    \item $[D_{\rho'}] \notin \Gamma(A)$ or
    \item $[D_{\rho'}] = [D_{\rho}]$ for some $\rho \in A$ but $\rho' \prec \rho.$
\end{enumerate}
It is clear that the sets $A$ and $\widehat{A}$ do not intersect.

Now for any basic subset $A \subseteq \Sigma(1)$ we can define a subset $Z_A$ of the total coordinate space $\BA^d = \BA^{n+k}$ as follows:
$$Z_A = \{(x_{\rho_1},\ldots, x_{\rho_d}) \in \mathbb{A}^d \mid x_{\rho} \neq 0\ \text{for all} \ \rho \in A \ \text{and} \ x_{\rho'} =  0 \ \text{for all}\ \rho' \in \widehat{A} \}.$$

\begin{lemma}\label{lem2} Let $A$ be a basic subset in $\Sigma(1)$ and $e \in \FR_{\rho_0}$ a Demazure root for some $\rho_0 \in \Sigma(1)$ such that $U_e \subseteq U.$
\begin{enumerate}
    \item If $\rho_0 \in A\sqcup \widehat{A}$ then $U_e$ acts trivially on $Z_A.$
    \item The subset $Z_A$ is $G_X\times U$-invariant.
    \item  For a point $p\in Z_A$ the coordinates $x_{\rho}$ with $\rho \in A$ are constant  with respect to the action of $U$.
\end{enumerate}
\end{lemma}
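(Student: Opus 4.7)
For any Demazure root $e \in \FR_{\rho_0}$ the subgroup $U_e$ acts on $\BA^d$ by modifying only the $\rho_0$-coordinate, via $s \cdot x_{\rho_0} = x_{\rho_0} + s X_e$ where $X_e = \prod_{\rho' \neq \rho_0} x_{\rho'}^{\langle e,\, n_{\rho'} \rangle}$. Therefore (1) is equivalent to $X_e \equiv 0$ on $Z_A$, which, by inspection of the defining conditions of $Z_A$, amounts to the existence of some ray in the positive support $S = \{\rho' \neq \rho_0 \mid \langle e, n_{\rho'}\rangle > 0\}$ that lies in $\widehat{A}$. I will argue this by contradiction; parts (2) and (3) will then follow immediately.

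Assume $S \cap \widehat{A} = \emptyset$. Then $[D_{\rho'}] \in \Gamma(A)$ for every $\rho' \in S$, so I may fix non-negative integer expansions $[D_{\rho'}] = \sum_{\rho \in A} c_{\rho',\rho}\, [D_\rho]$. Combined with the relation $[D_{\rho_0}] = \sum_{\rho' \in S} \langle e, n_{\rho'}\rangle [D_{\rho'}]$ coming from \eqref{ex1}, this produces an expansion $[D_{\rho_0}] = \sum_{\rho \in A} d_\rho\, [D_\rho]$ with $d_\rho \in \BZ_{\geq 0}$. Pick a distinguished $\rho^{**} \in A$ as follows: if $\rho_0 \in A$, set $\rho^{**} = \rho_0$; if $\rho_0 \in \widehat{A}$, the inclusion $[D_{\rho_0}] \in \Gamma(A)$ just derived rules out clause~(i) of the definition of $\widehat{A}$, so clause~(ii) supplies $\rho^{**} \in A$ with $[D_{\rho^{**}}] = [D_{\rho_0}]$ and $\rho_0 \prec \rho^{**}$. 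If $d_{\rho^{**}} = 0$ then $[D_{\rho^{**}}] \in \Gamma(A \setminus \{\rho^{**}\})$, contradicting basicness of $A$; otherwise a coordinatewise comparison in $\Cl^+(X) \simeq \BZ_{\geq 0}^k$, using that $[D_{\rho^{**}}] \neq 0$ (also forced by basicness), yields $d_{\rho^{**}} = 1$ and $d_\rho = 0$ for $\rho \in A \setminus \{\rho^{**}\}$. Reading the integer identity $1 = d_{\rho^{**}} = \sum_{\rho' \in S} \langle e, n_{\rho'}\rangle\, c_{\rho', \rho^{**}}$ in $\BZ_{\geq 0}$ isolates a unique $\rho^* \in S$ with $\langle e, n_{\rho^*} \rangle = c_{\rho^*, \rho^{**}} = 1$, while all the other $c_{\rho',\rho}$ with $\rho' \in S$ vanish; hence $[D_{\rho^*}] = [D_{\rho^{**}}]$. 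Since $\rho^* \notin \widehat{A}$ but $[D_{\rho^*}] = [D_{\rho^{**}}]$ with $\rho^{**} \in A$, either $\rho^* = \rho^{**}$ or $\rho^{**} \prec \rho^*$; combined with $\rho_0 \preceq \rho^{**}$, transitivity of $\prec$ gives $\rho_0 \prec \rho^*$.

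Now Lemma~\ref{lem1}(3) supplies a unique semisimple Demazure root $e'' \in \FR_{\rho^*}$ with $U_{e''} \subseteq U$, $\langle e'', n_{\rho_0} \rangle = 1$, $\langle e'', n_{\rho^*} \rangle = -1$, and $\langle e'', n_{\rho''} \rangle = 0$ for all other $\rho''$. The sum $e + e'' \in M$ vanishes on $n_{\rho_0}$ and $n_{\rho^*}$ and equals $\langle e, n_{\rho'}\rangle \geq 0$ on every other ray. Since $\Sigma$ is complete, each $v \in N_\BQ$ is a non-negative combination of rays, so $\langle e + e'', v\rangle \geq 0$ for all $v \in N_\BQ$; applied to $\pm v$ this forces $e + e'' = 0$. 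Hence $e = -e''$ is semisimple, so $U_e \subseteq U$ translates into $\langle e, v\rangle > 0$, i.e.\ $\langle e'', v\rangle < 0$, contradicting $U_{e''} \subseteq U$. The main obstacle is isolating the correct representative $\rho^{**}$ when $\rho_0 \in \widehat{A}$ and threading the chain $\rho_0 \preceq \rho^{**} \preceq \rho^*$ so that it actually closes up to $\rho_0 \prec \rho^*$, permitting the invocation of Lemma~\ref{lem1}(3).

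With (1) established, (2) is essentially automatic: $G_X$ acts coordinatewise by scaling and so preserves the vanishing/nonvanishing pattern defining $Z_A$; and because $U$ is generated by the one-parameter subgroups $U_e$ with $e \in \FS^+ \sqcup \mathfrak{U}$, each of which alters at most the $\rho_0$-coordinate, invariance of $Z_A$ follows from (1) whenever $\rho_0 \in A \sqcup \widehat{A}$ and is trivial otherwise (the $\rho_0$-coordinate being unconstrained on $Z_A$). Part (3) is the same observation: for $\rho \in A$ the coordinate $x_\rho$ is fixed by every generator $U_e$, either because $\rho \neq \rho_0$ (so the action does not touch $x_\rho$) or because $\rho = \rho_0 \in A$ and then (1) gives $X_e|_{Z_A} = 0$.
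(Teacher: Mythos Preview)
Your argument is correct and rests on the same core computation as the paper's: write $[D_{\rho_0}] = \sum_{\rho'\in S}\langle e,n_{\rho'}\rangle[D_{\rho'}]$ from the exact sequence, expand each $[D_{\rho'}]$ inside $\Gamma(A)$, and use basicness together with positivity in $\Cl^+(X)\simeq\BZ_{\geq 0}^k$ to force a single $\rho^*\in S$ with $[D_{\rho^*}]=[D_{\rho_0}]$. The paper carries this out by splitting into four cases (whether $\rho_0\in A$ or $\rho_0\in\widehat A$, and whether $S$ already meets $\{\rho' : [D_{\rho'}]\notin\Gamma(A)\}$) and in the nontrivial cases concludes directly that $e$ itself is the special root of Lemma~\ref{lem1}(3), giving $\rho^*\prec\rho_0$ and hence $\rho^*\in\widehat A$. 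You instead unify the cases via the representative $\rho^{**}\in A$, run a single contradiction, and finish by the neat observation that $e+e''$ is non-negative on every ray of a complete fan, forcing $e=-e''$ and contradicting $U_e,U_{e''}\subseteq U$. This buys you a shorter, more uniform write-up; the paper's case split is more elementary in that it never needs to invoke completeness of $\Sigma$ at the end. One small point worth making explicit: your coordinatewise comparison needs $[D_\rho]\neq 0$ for \emph{every} $\rho\in A$ (not just $\rho^{**}$) to force $d_\rho=0$, but this follows from basicness by the same one-line argument you give for $\rho^{**}$. Parts~(2) and~(3) match the paper.
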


\begin{proof}

Let us prove the part (1).

\textbf{Case 1:}  $\rho_0 \in A$. From exact sequence \ref{ex1} we have 

\begin{equation}\label{eq1}
\sum_{\rho \in \Sigma(1)}\langle e, n_{\rho} \rangle [D_{\rho}] = \sum_{\rho \neq \rho_0}\langle e, n_{\rho}\rangle[D_{\rho}] - [D_{\rho_0}] = 0.
\end{equation}

\textbf{Case 1.a:} Suppose that there is $\rho'$ with $[D_{\rho'}] \notin \Gamma(A)$ and $\langle e, n_{\rho'}\rangle > 0.$ Then $\rho' \in \widehat{A}$ and $x_{\rho'} = 0$ on $Z_A.$ But then the monomial 
$$X_e = \prod_{\rho \neq \rho_0} x_{\rho}^{\langle e, n_{\rho}\rangle}$$
is divisible by $x_{\rho'}$, and therefore is equal to zero on $Z_A$. Then $U_e$ acts trivially on $Z_A.$

\textbf{Case 1.b:} Suppose that for all $\rho'$ with $[D_{\rho'}] \notin \Gamma(A)$ we have $\langle e, n_{\rho'}\rangle = 0.$ We consider the set $C = \{\rho \in \Sigma(1) \mid \langle e, n_{\rho} \rangle >0 \}.$ Then for each $\rho \in C$ we have $[D_{\rho}] \in \Gamma(A)$. So for all $\rho \in C$ there are numbers $b^{\widehat{\rho}}_{\rho} \in \BZ_{\geq 0}$ such that
$$[D_{\rho}] = \sum_{\widehat{\rho} \in A}b^{\widehat{\rho}}_{\rho}[D_{\widehat{\rho}}]$$
and
 \begin{equation}\label{eq123}[D_{\rho_0}] = \sum_{\rho \in C} \sum_{\widehat{\rho} \in A}\langle e, n_{\rho}\rangle b^{\widehat{\rho}}_{\rho}[D_{\widehat{\rho}}] = \sum_{\rho \in C} \sum_{\widehat{\rho} \in A}c^{\widehat{\rho}}_{\rho}[D_{\widehat{\rho}}],\end{equation}
 where $c^{\widehat{\rho}}_{\rho} = \langle e, n_{\rho}\rangle b_{\rho}^{\widehat{\rho}}.$ If $c^{\rho_0}_{\rho} = 0$ for all $\rho \in C$ then $[D_{\rho_0}] \in \Gamma(A\setminus\{\rho_0\}).$ But $A$ is basic. So there is $\rho$ with $c^{\rho_0}_{\rho} > 0$. But then $b^{\rho_0}_{\rho} = 1$ and all other coefficients in equation (\ref{eq123}) are zero. Therefore, equation (\ref{eq1}) has the form
$$[D_{\rho}] - [D_{\rho_0}] = 0.$$
It implies that $e$ is a Demazure root from Lemma \ref{lem1} and $\rho \prec \rho_0.$ But then $\rho \in \widehat{A}$. Since $X_e$ divisible by $x_{\rho}$ the group $U_e$ acts trivially on $Z_A$.

\textbf{Case 2:}  $\rho_0 \in \widehat{A}$. Then either  $[D_{\rho_0}] \notin \Gamma(A)$  or $[D_{\rho_0}] = [D_{\rho'}]$ for some $\rho' \in A$ with $\rho_0 \prec \rho'$.

\textbf{Case 2.a:} Suppose $[D_{\rho_0}] \notin \Gamma(A)$. Then equation \ref{eq1} implies that there is $\rho' \in \Sigma(1)$ with $[D_{\rho'}]\notin \Gamma(A) $ and $\langle e, n_{\rho'}\rangle > 0.$ In this case we again obtain that $X_e$ is divisible by $x_{\rho'}$ with $\rho' \in \widehat{A}.$

\textbf{Case 2.b:} Suppose $[D_{\rho_0}] = [D_{\rho'}]$ for some $\rho' \in A$ with $\rho_0 \prec \rho'$. If $\langle e, n_{\rho}\rangle > 0$ for some $\rho$ with $[D_{\rho}] \notin \Gamma(A)$ then the monomial $X_e$ is equal to zero on $Z_A$.  So we can consider only the case when $\langle e, n_{\rho}\rangle > 0$ only when $[D_{\rho}] \in \Gamma(A).$ 

By equation 3 we have 
$$\sum_{\rho \neq \rho_0}\langle e, n_{\rho}\rangle [D_{\rho}] - [D_{\rho_0}] =  0$$
and in the first sum all terms belongs to $\Gamma(A).$ Applying the same arguments from case 1b we obtain that there is only one $\rho \neq \rho_0$ with $\langle e, n_{\rho}\rangle \neq 0$ and $\langle e, n_{\rho}\rangle = 1$. Hence, $e$ is the Demazure root from Lemma \ref{lem1}  and $\rho \prec \rho_0 \prec \rho'.$ We have $[D_{\rho}] = [D_{\rho}] = [D_{\rho'}].$ So $\rho \in \widehat{A}$ and the monomial $X_e$ is zero on $Z_A$.

Now we will prove the parts (2) and (3). It is clear that $Z_A$ is $G_X$-invariant. The group $U$ is generated by the subgroups $U_e$ with $U_e \subseteq U.$ If $e\in \FR_{\rho_0}$ with $\rho_0 \in A\sqcup \widehat{A}$ then $U_e$ acts trivially on $Z_A$ by part (1). If $e\in \FR_{\rho_0}$ with $\rho_0 \notin A\sqcup \widehat{A}$ then the group $U_e$ do not change variables $x_{\rho}$ for $\rho \in A \sqcup \hat{A}$. So $Z_A$ is $U_e$-invariant and for all points $p\in Z_A$ the coordinates $x_{\rho}$ with $\rho \in A$ are constant with respect to the $U$-action.

\end{proof}

Since the group $\Cl(X)$ is a free abelian group of rank $k$ then $\Cl(X)_{\BQ} = \Cl(X)\otimes_{\BZ} \BQ$ is a $\BQ$-vector space of dimension $k$ and $\Cl(X)$ is naturally embedded in $\Cl(X)_{\BQ}.$

\begin{lemma}\label{lem3}
    Let $A$ be a basic subset in $\Sigma(1).$ Then the number of $G_X\times U$-orbits in $Z_A$ is finite if and only if the set $\{[D_{\rho}] \mid \rho \in A\}$ is $\BQ$-linearly independent. In this case the set $Z_A$ is a $G_X\times U$-orbit. 
\end{lemma}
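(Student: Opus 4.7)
For the \emph{only if} direction, I argue contrapositively. By Lemma~\ref{lem2}(3), $U$ fixes every coordinate $x_\rho$ with $\rho\in A$, so the $(G_X\times U)$-action on those coordinates reduces to the $G_X$-action, which factors through the torus homomorphism $\pi_A\colon G_X\to(\BK^\times)^{|A|}$, $g\mapsto(g([D_\rho]))_{\rho\in A}$. Its character map $\BZ^{|A|}\to\Cl(X)$ sends the $\rho$-th basis vector to $[D_\rho]$. If $\{[D_\rho]\}_{\rho\in A}$ is $\BQ$-linearly dependent then this character map has nontrivial kernel (as $\Cl(X)$ is torsion-free), so $\pi_A$ fails to be surjective; its image is a proper closed subtorus of $(\BK^\times)^{|A|}$ with infinitely many cosets, and every $(G_X\times U)$-orbit in $Z_A$ has its $A$-coordinates inside a single such coset.

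For the \emph{if} direction, assume $\{[D_\rho]\}_{\rho\in A}$ is $\BQ$-linearly independent, so $\pi_A$ is surjective. For any $p\in Z_A$, some $g\in G_X$ takes all $A$-coordinates of $g\cdot p$ to $1$. Set $Z_A':=\{q\in Z_A\mid x_\rho(q)=1\text{ for all }\rho\in A\}\cong\BA^{|B|}$ where $B:=\Sigma(1)\setminus(A\sqcup\widehat{A})$. By Lemma~\ref{lem2}, $U$ preserves $Z_A'$ and fixes the $A$-coordinates, so it suffices to show that $U$ acts transitively on $Z_A'$.

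To this end, for each $\rho_0\in B$ I plan to exhibit a Demazure root $e\in\FR_{\rho_0}$ with $U_e\subseteq U$ whose action on $Z_A'$ is the pure translation $x_{\rho_0}\mapsto x_{\rho_0}+s$. Since $\rho_0\notin\widehat{A}$, $[D_{\rho_0}]$ admits a unique expression $\sum_{\rho\in A}c_\rho[D_\rho]$ with $c_\rho\in\BZ_{\geq 0}$. The tuple in $\BZ^d$ with entries $c_\rho$ at $\rho\in A$, $-1$ at $\rho_0$, and $0$ elsewhere lies in the kernel of $\BZ^d\to\Cl(X)$, so by~\eqref{ex1} it lifts uniquely to an element $e\in M$; the sign pattern shows $e\in\FR_{\rho_0}$. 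Then $X_e=\prod_{\rho\in A}x_\rho^{c_\rho}\equiv 1$ on $Z_A'$, so $U_e$ acts by the claimed translation.

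The main obstacle is verifying $U_e\subseteq U$. If $e$ is unipotent this is immediate. If $e$ is semisimple then $-e$ is also a Demazure root, which, by inspecting pairings, forces exactly one $c_\rho=1$ and the rest $0$; hence $[D_{\rho_0}]=[D_\rho]$ for a unique $\rho\in A$. The condition $\rho_0\notin\widehat{A}$ gives $\rho_0\not\prec\rho$, and because $[D_{\tau_1}],\dots,[D_{\tau_k}]$ form a basis of $\Cl(X)$ (so two distinct negative rays cannot share a class), at least one of $\rho,\rho_0$ must be positive; the definition of $\prec$ then forces $\rho\prec\rho_0$. Lemma~\ref{lem1}(3) produces a semisimple root in $\FR_{\rho_0}$ with $U_e\subseteq U$ realizing precisely the prescribed pairings, which must agree with $e$ by injectivity of $M\to\BZ^d$. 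The resulting subgroups $U_{e}$ for varying $\rho_0\in B$ act on distinct coordinates of $Z_A'$ as independent translations, generating a transitive action on $Z_A'$.
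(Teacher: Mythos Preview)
Your argument is correct and follows essentially the same route as the paper's proof: in the ``only if'' direction you phrase the obstruction via cosets of the image subtorus $\pi_A(G_X)\subsetneq(\BK^\times)^{|A|}$ rather than writing down the explicit invariant monomial $\prod_{\rho\in A}x_\rho^{b_\rho}$, and in the ``if'' direction you normalize to the slice $Z_A'$ before applying the root subgroups, whereas the paper argues directly that $G_X$ moves the $A$-coordinates freely and $U$ moves the $B$-coordinates freely; the underlying construction of the root $e\in\FR_{\rho_0}$ and the unipotent/semisimple case split are identical. Your treatment of the semisimple case is in fact slightly more detailed than the paper's, which simply asserts $\rho\prec\rho_0$ without spelling out why two distinct rays with equal class must be $\prec$-comparable.
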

\begin{proof}

    Suppose that the set $\{[D_{\rho} ]\mid \rho \in A \}$ is $\BQ$-linearly dependent. Then there is a nontrivial linear combination $\sum_{\rho}b_{\rho}[D_{\rho}] = 0 $ for some $b_{\rho} \in \BZ.$ Then the function 
    $$f = \prod_{\rho \in A}x_{\rho}^{b_{\rho}}$$
    is $G_X$-invariant. All subgroups $U_e \subseteq U$ such that $e\in \FR_{\rho_0}$ with $\rho_0 \in A\sqcup \widehat{A}$ act trivially on $Z_A$ by Lemma \ref{lem2}. When $\rho_0 \notin A\sqcup \widehat{A}$ the subgroup $U_e$ does not change variables $x_{\rho}$ with $\rho \in A$. Hence, the function $f$ is a regular nonconstant $G_X\times U$-invariant on $Z_A$. Therefore, there are infinitely many orbits of $G_X\times U$ in $Z_A$.

    Now suppose that the set $\{[D_{\rho} ]\mid \rho \in A \}$ is $\BQ$-linearly independent. Since $\Cl(X)$ is a free abelian group of rank $k$, then the group $G_X$ is a torus $(\BK^*)^k$ of dimension $k$ and $\Cl(X)$ is naturally isomorphic to the lattice of characters of $(\BK^*)^k$. One can choose a basis $f_1,\ldots, f_k \in \Cl(X)$ such that 
    $$[D_{\rho}] = c_{\rho}f_{\rho}, \ \forall \rho \in A,$$ 
    where $c_{\rho} \in \BZ$ and $f_{\rho} \in \{f_1,\ldots, f_k\}$. Moreover, $f_{\rho} \neq f_{\rho'}$ when $\rho \neq \rho'\in A$. We denote by $t_1, \ldots, t_k$ the corresponding coordinates on $(\BK^*)^k$. Then 
    $$t\circ x_{\rho} = t_{\rho}^{c_{\rho}}x_{\rho}, \ \text{for} \ \rho\in A.$$
    Then $G_X$-orbit of a point $p\in Z_A$ contains points $q$ whose coordinates $x_{\rho}$ with $\rho \in A$ can be arbitrary nonzero elements of $\BK.$ We will show that by acting with the group $U$ the coordinates $x_{\rho}$ with $\rho \notin A\sqcup \widehat{A}$ can be made arbitrary without changing the coordinates $x_{\rho}$ with $\rho \in A.$ This will show that $Z_A$ is a $G_X\times U$-orbit.

    Let us take $\rho_0 \notin A\sqcup \widehat{A}$. Then $[D_{\rho_0}] \in \Gamma(A)$. Hence,
    $$[D_{\rho_0}] = \sum_{\rho\in A}a_{\rho}[D_{\rho}]$$
    for some $a_{\rho} \in \BZ_{\geq 0}.$ Therefore, 
    $$\sum_{\rho\in A}a_{\rho}[D_{\rho}] - [D_{\rho_0}] = 0.$$
    It follows from exact sequence (\ref{ex1}) that there is a Demazure root $e\in M$ such that $\langle e, n_{\rho_0}\rangle = -1$, $\langle e, n_{\rho}\rangle = a_{\rho}$ for $\rho \in A$ and $\langle e, n_{\rho'}\rangle = 0$ for $\rho' \notin A\sqcup \{\rho_0 \}.$ Therefore, $e \in \FR_{\rho_0}$. If $\sum_{\rho \in A}a_{\rho}>1$ then $e$ is a unipotent root and $U_e \subseteq U$. 

    If $\sum_{\rho \in A}a_{\rho} = 1$ then $[D_{\rho_0}] = [D_{\rho}]$ for some $\rho \in A$ with $\rho \prec \rho_0$. Then $e$ is a Demazure root from Lemma \ref{lem1} and $U_e\subseteq U.$

    The monomial $X_e = \prod_{\rho \in A}x_{\rho}^{\langle e, n_{\rho} \rangle}$ is not equal to zero on $Z_A.$ So $U_e$-orbit of a point $p \in Z_A$ is the set of points $q$ such that $x_{\rho'}(q) = x_{\rho'}(p)$ for all $\rho' \neq \rho_0$ and $x_{\rho_0}(q)$ can be arbitrary. Therefore, by acting with the group $U$ the coordinates $x_{\rho}$ with $\rho \notin A\sqcup \widehat{A}$ can be made arbitrary without changing the coordinates $x_{\rho}$ with $\rho \in A.$

\end{proof}

\begin{remark}
    The set $\{[D_{\rho}] \mid \rho \in \Sigma(1)\}\subseteq \Cl(X)_{\BQ} $ is Gale dual to the set $\{n_{\rho} \mid \rho \in \Sigma(1)\}$. So Lemma \ref{lem3} can be formulated in the following way. The number of $G_X\times U$-orbits in $Z_A$ is finite if and only if the set $\{n_{\rho} \mid \rho \notin A\}$ generates $N_\BQ.$
\end{remark}

It remains to understand which sets $Z_A$ are contained in $\widehat{X}$.

\begin{lemma}\label{lem4}
    Let $\mathcal{A}$ be the set of all basic subsets in $\Sigma(1)$. Then
    \begin{enumerate}
        \item $\BA^{d} = \sqcup_{A\in \mathcal{A}} Z_A;$
        \item the set $Z_{A}$ is contained in $\widehat{X}$ if and only if there is a cone $\sigma \in \Sigma$ such that $\widehat{A} \subseteq \sigma(1).$
    \end{enumerate}
\end{lemma}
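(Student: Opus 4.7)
The plan is to dispose of the two parts separately: (1) is combinatorial and amounts to attaching to each $p\in\BA^d$ a canonical basic subset; (2) reduces by $U$-invariance of $\WX$ to checking the zero-pattern of a single distinguished representative of $Z_A$.

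For part (1) I would associate to each $p\in\BA^d$ its support $S(p)=\{\rho\in\Sigma(1)\mid x_\rho(p)\neq 0\}$ and then build a basic subset $A(p)\subseteq S(p)$ as follows. The submonoid $\Gamma(S(p))\subseteq\Cl^+(X)\simeq(\BZ_{\geq 0})^k$ has a unique minimal generating set, namely its set $I$ of irreducible elements. For each class $c\in I$ the rays of $S(p)$ with class $c$ are pairwise comparable under $\prec$ (any two distinct rays with the same class in $\Cl^+(X)$ are comparable by a direct check of cases), so this set has a unique $\prec$-minimum $\rho_c$; I let $A(p)=\{\rho_c\mid c\in I\}$. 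Standard monoid theory gives that $A(p)$ is basic, and $p\in Z_{A(p)}$ follows because a $\rho'\in\widehat{A(p)}$ either fails $[D_{\rho'}]\in\Gamma(A(p))=\Gamma(S(p))$, so $\rho'\notin S(p)$, or satisfies $[D_{\rho'}]=[D_{\rho_c}]$ with $\rho'\prec\rho_c$ for some $c\in I$, so $\rho'\in S(p)$ would contradict the minimality of $\rho_c$. For uniqueness, if $p\in Z_A$ for some basic $A$, then $\widehat{A}\cap S(p)=\emptyset$ forces $\Gamma(A)=\Gamma(S(p))$ and hence $\{[D_\rho]\mid\rho\in A\}=I$; the second clause in the definition of $\widehat{A}$ then forces each element of $A$ to be $\prec$-minimum in $S(p)$ among rays of its class, giving $A=A(p)$.

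For part (2), one direction is immediate: the distinguished point $p_0\in Z_A$ with $x_\rho(p_0)=1$ for every $\rho\in\Sigma(1)\setminus\widehat{A}$ and $x_{\rho'}(p_0)=0$ for $\rho'\in\widehat{A}$ has zero set exactly $\widehat{A}$, so $p_0\in\WX$ provides the required cone $\sigma$. For the converse, given any $p\in Z_A$, I will produce $p'$ in the $U$-orbit of $p$ with zero set exactly $\widehat{A}$; since $\WX$ is $U$-invariant this gives $p\in\WX$. To build $p'$, for each $\rho_0\in\Sigma(1)\setminus(A\sqcup\widehat{A})$ I use $[D_{\rho_0}]\in\Gamma(A)$ to write $[D_{\rho_0}]=\sum_{\rho\in A}a_\rho[D_\rho]$ with $a_\rho\in\BZ_{\geq 0}$; then exact sequence (\ref{ex1}) produces a unique $e\in M$ satisfying $\langle e,n_{\rho_0}\rangle=-1$, $\langle e,n_\rho\rangle=a_\rho$ for $\rho\in A$, and $\langle e,n_{\rho''}\rangle=0$ elsewhere, so $e\in\FR_{\rho_0}$. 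If $\sum_\rho a_\rho\geq 2$ then $e$ is unipotent, hence $U_e\subseteq U$; if $\sum_\rho a_\rho=1$ then $[D_{\rho_0}]=[D_{\rho^*}]$ for the unique $\rho^*\in A$ with $a_{\rho^*}=1$, and $\rho_0\notin\widehat{A}$ together with the totality of $\prec$ on same-class rays yields $\rho^*\prec\rho_0$, so Lemma \ref{lem1} identifies $e$ as a semisimple root with $U_e\subseteq U$. The action of $U_e$ modifies $x_{\rho_0}$ by $s\cdot\prod_{\rho\in A}x_\rho^{a_\rho}$, which is nonzero on $Z_A$, so a suitable $s$ makes $x_{\rho_0}$ nonzero while leaving all $x_\rho$ with $\rho\in A\sqcup\widehat{A}$ untouched. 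Iterating over all $\rho_0\in\Sigma(1)\setminus(A\sqcup\widehat{A})$ produces the required $p'$.

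The main obstacle is the reverse direction of part (2), and specifically ensuring that every auxiliary root $e$ lies in the chosen maximal unipotent $U$. The unipotent case is automatic, but the semisimple case ($\sum_\rho a_\rho=1$) rests on translating the hypothesis $\rho_0\notin\widehat{A}$ into the order comparison $\rho^*\prec\rho_0$; this uses the asymmetry built into the second clause of the definition of $\widehat{A}$ and the totality of $\prec$ between rays sharing a class, without which the construction of $p'$ would fail.
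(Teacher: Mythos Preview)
Your proof is correct. Part (2) is essentially the paper's argument: both produce a point in the $U$-orbit of $p$ whose zero set is exactly $\widehat{A}$, using Demazure roots built from a decomposition $[D_{\rho_0}]=\sum_{\rho\in A}a_\rho[D_\rho]$ (the paper simply cites the construction already carried out in the proof of Lemma~\ref{lem3}, whereas you spell it out again).

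Part (1) is where you genuinely diverge. The paper characterizes the basic subset attached to $p$ \emph{dynamically}: it defines $A_p$ as the set of rays $\rho$ with $x_\rho(p)\neq 0$ and $x_\rho$ constant along the $U$-orbit of $p$, and then uses the root subgroups $U_e$ (via Lemma~\ref{lem2} and the argument of Lemma~\ref{lem3}) to show this set is basic and determines $Z_A$ uniquely. Your construction is purely combinatorial: from the support $S(p)$ you pass to the irreducible elements of $\Gamma(S(p))$ and select, for each such class, the $\prec$-minimal ray in $S(p)$ carrying it. This avoids any appeal to the $U$-action in part (1) and makes the disjointness/covering statement a statement about monoids and the order $\prec$ alone; it also makes transparent why $A$ is recovered from $p$ (it encodes the irreducibles of $\Gamma(S(p))$ together with a choice of $\prec$-minimal representatives). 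The paper's approach, on the other hand, ties the decomposition directly to the $G_X\times U$-orbit structure, which is what is ultimately needed, and reuses work already done. Both arguments rely on the same underlying fact---that distinct rays with equal divisor class are $\prec$-comparable (immediate since the $[D_{\tau_j}]$ are a basis, so no two negative rays share a class)---which you isolate explicitly and the paper uses implicitly.
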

\begin{proof}
    (1) Let $A \in \mathcal{A}$ and $p \in Z_A.$ As we saw in the proof of Lemma \ref{lem3}, when $U$ acts on $Z_A$, only the coordinates $x_{\rho}$ with $\rho \in A \sqcup \widehat{A}$ remain constant.Then the set $A$ is the set of coordinates $x_{\rho}$ which are nonzero at the point $p$ and constant on $U$-orbit of $p$. Then $Z_A \cap Z_B = \emptyset$ if $A\neq B.$ 

    For a point $p\in \BA^d$ consider the set $A_p \subseteq \Sigma(1)$ of all rays $\rho \in \Sigma(1)$ such that $x_{\rho}$ are nonzero at the point $p$ and constant on $U$-orbit of $p$. Let us prove that $A_p$ is basic. 
    
    Let $\rho$ be an element of $A_p$. Assume that $[D_{\rho}] = [D_{\rho'}]$ for  some $ \rho' \in A_p$ with $\rho \prec \rho'$. Then we have the Demazure root $e \in \FR_{\rho'}$ from Lemma \ref{lem1} with $U_e \subseteq U$. The subgroup $U_e$ changes $x_{\rho'}$ if $x_{\rho} \neq 0$. This contradicts the definition of the set $A_p$.

    Now suppose that
    $$[D_{\rho}] = \sum_{\rho' \in A_p\setminus \{\rho\}}a_{\rho'}[D_{\rho'}]$$
    for some integers $a_{\rho'} \in \BZ_{\geq 0}$ with $\sum_{\rho'}a_{\rho'}>1$. Then there is a Deamzure root $e\in \FR_{\rho}$ such that $\langle e, n_{\rho'}\rangle = a_{\rho'}$ for $\rho' \in A_p\setminus \{\rho\}$ and $\langle e, n_{\rho'}\rangle  = 0$ for $\rho' \notin A_p$. Then $e$ is unipotent. Hence, $U_e\subseteq U$ and $U_e$ changes $x_{\rho}$ on $U$-orbit of $p$. Again, it contradicts the definition of the set $A_p$. Therefore, $[D_{\rho}] \notin \Gamma(A_p\setminus \{\rho\})$ and $A_p$ is a basic subset. Obviously $p \in Z_{A_p}.$

    (2) Let $A$ be a basic subset. For any point $p \in Z_A$ there is a point $q$ in $U$-orbit of $p$ such that $x_{\rho}(q) = 0$ if and only if $\rho \in \widehat{A}.$ Then $q \in \widehat{X}$ if and only if there is a cone $\sigma \in \Sigma$ with $\rho \in \sigma(1)$ for all $\rho \in \widehat{A}$. Since $Z_A$ and $\widehat{X}$ are $U$-invariant, we obtain that $Z_A\subseteq \widehat{X}$ if and only if there is a cone $\sigma \in \Sigma$ with $\rho \in \sigma(1)$ for all $\rho \in \widehat{A}$.
\end{proof}

Thus, we obtain the following criterion.

\begin{proposition}\label{mprop}
    Let $X$ be a complete simplicial toric variety and $\Sigma$ be the corresponding fan. Then a maximal unipotent subgroup in $\Aut(X)$ acts on $X$ with a finite number of orbits if and only if $X$ is radiant and for all basic subsets $A \subseteq \Sigma(1)$ such that $\widehat{A} \subseteq \sigma(1)$ for some $\sigma \in \Sigma$ the set $\{[D_{\rho}]\mid \rho \in A\}$ is $\BQ$-linear independent.
\end{proposition}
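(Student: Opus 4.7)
The plan is to reduce the question to counting $G_X\times U$-orbits on $\widehat{X}$ and then combine the four preceding lemmas, which have already done essentially all the work.

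First, since $X$ is simplicial, the map $\pi\colon \widehat{X} \to X$ is a \emph{geometric} quotient by $G_X$. The $U$-action on $X$ lifts to a $G_X$-equivariant $U$-action on $\widehat{X}$, so $U$-orbits on $X$ are in bijection with $G_X\times U$-orbits on $\widehat{X}$. In particular, the number of $U$-orbits on $X$ is finite if and only if the number of $G_X\times U$-orbits on $\widehat{X}$ is finite.

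For the necessity of the radiant condition, I would argue that if $U$ has finitely many orbits on $X$ then one of them is open by dimension count, so $X$ is radiant by definition. This allows us to fix a bilateral structure on $\Sigma$, choose a vector $v \in N$ as in Section \ref{mainsec}, and take $U$ to be the resulting specific maximal unipotent subgroup; by conjugacy of maximal unipotents inside $\Aut_g(R(X))$ this choice is harmless.

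Assuming $X$ is radiant, apply Lemma \ref{lem4}(1) to get a (finite) decomposition $\BA^d = \bigsqcup_{A \in \mathcal{A}} Z_A$ indexed by basic subsets $A \subseteq \Sigma(1)$. Each $Z_A$ is $G_X\times U$-invariant by Lemma \ref{lem2}(2), and $\widehat{X}$ is $G_X\times U$-invariant. The proof of Lemma \ref{lem4}(2) shows that every $U$-orbit in $Z_A$ visits the distinguished point $q$ whose zero coordinates are exactly those indexed by $\widehat{A}$; such a $q$ lies in $\widehat{X}$ iff $\widehat{A} \subseteq \sigma(1)$ for some cone $\sigma$. Combined with $U$-invariance, this forces the dichotomy: either $Z_A \subseteq \widehat{X}$ or $Z_A \cap \widehat{X} = \emptyset$. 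Hence
\[
\widehat{X} = \bigsqcup_{\substack{A \in \mathcal{A} \\ \exists \sigma:\ \widehat{A} \subseteq \sigma(1)}} Z_A,
\]
a finite disjoint union of $G_X\times U$-invariant subsets.

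The count of $G_X\times U$-orbits on $\widehat{X}$ is therefore finite if and only if each summand $Z_A$ above contains only finitely many orbits. By Lemma \ref{lem3}, this happens precisely when $\{[D_\rho] \mid \rho \in A\}$ is $\BQ$-linearly independent (in which case $Z_A$ is in fact a single orbit). Translating back via the bijection in the first step yields the stated criterion.

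I do not anticipate a substantive obstacle: the lemmas have already isolated every geometric input, and the proof is a direct assembly. The only subtlety worth flagging is the dichotomy $Z_A \subseteq \widehat{X}$ or $Z_A \cap \widehat{X} = \emptyset$, which is not stated verbatim in Lemma \ref{lem4}(2) but follows from its proof together with the $G_X\times U$-invariance of both $Z_A$ and $\widehat{X}$.
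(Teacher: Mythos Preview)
Your proof is correct and follows essentially the same route as the paper's: reduce to counting $G_X\times U$-orbits on $\widehat{X}$ via the geometric quotient, decompose $\widehat{X}$ into the pieces $Z_A$ using Lemmas \ref{lem2} and \ref{lem4}, and apply Lemma \ref{lem3} to each piece. One small slip: the conjugacy you invoke should be of maximal unipotent subgroups in $\Aut(X)$ (as in the paper), not in $\Aut_g(R(X))$, since the hypothesis concerns orbits on $X$; this does not affect the argument.
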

\begin{proof}
    Suppose that a maximal unipotent subgroup acts on $X$ with a finite number of orbits. Then $X$ is radiant. Consider the maximal unipotent subgroup $U$ in $\Aut_g(R(X))$ as above. Denote by $\overline{U}$ its image in $\Aut(X)$. Then $\overline{U}$ is a maximal unipotent subgroup in $\Aut(X)$. Since all maximal unipotent subgroups in $\Aut(X)$ are conjugated, the group $\overline{U}$ acts on $X$ with a finite number of orbits.

    The variety $X$ is simplicial. So $X$ is a geometric quotient space  $\widehat{X}/G_X$. Then, there is a finite number of $\overline{U}$-orbits on $X$ if and only if there is a finite number of $G_X\times U$-orbits on $\widehat{X}$. Now the statement follows from Lemmas \ref{lem3} and \ref{lem4}. The converse also follows from Lemmas \ref{lem3} and \ref{lem4}.
\end{proof}

We will call a basic subset $A\subseteq \Sigma(1)$ \emph{minimal} if the condition $[D_{\rho}] = [D_{\rho'}]$ for some $\rho \in A, \rho' \in \Sigma(1)$ implies $\rho \prec \rho'$ or $\rho = \rho'.$

\begin{lemma}
    
    Let $A\subseteq \Sigma(1)$ be a basic subset.
    
    \begin{enumerate}
        \item There is a unique minimal basic subset $A^0$ with $\Gamma(A) = \Gamma(A^0).$ Moreover, the sets  $\{[D_{\rho}]\mid \rho \in A\}$ and $\{[D_{\rho}]\mid \rho \in A^0\}$ coincide.
        \item The set $Z_A$ is a single $G_X\times U$-orbit if and only if $Z_{A^0}$ is.
        \item $\widehat{A^0} = \{\rho \in \Sigma(1) \mid [D_{\rho}] \notin \Gamma(A)\} \subseteq \widehat{A}.$
        \item If $Z_A \subseteq \widehat{X}$ then $Z_{A^0} \subseteq \widehat{X}.$
    \end{enumerate}

\end{lemma}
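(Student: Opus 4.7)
The plan is to exploit a structural observation about $\prec$: within the set of rays sharing a fixed divisor class $\alpha \in \Cl(X)$, the relation $\prec$ is a strict linear order with a unique minimum. This is because the negative rays $\tau_1,\ldots,\tau_k$ have pairwise distinct classes (their classes form a basis of $\Cl(X)$), so there is at most one negative ray per class, and the positive rays of class $\alpha$ are ordered by their index. Moreover, in any basic $A$ the classes $\{[D_\rho] : \rho \in A\}$ must be pairwise distinct, since $[D_\rho] = [D_{\rho'}]$ for distinct $\rho,\rho' \in A$ would immediately give $[D_\rho] \in \Gamma(A \setminus \{\rho\})$, contradicting basicness.

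For part~(1), I would define $A^0$ by sending each $\rho \in A$ to the $\prec$-minimum ray of class $[D_\rho]$. By distinctness of classes in $A$ this is a bijection $A \to A^0$ which preserves the set of divisor classes; in particular $\Gamma(A^0) = \Gamma(A)$ as monoids, $A^0$ is still basic (basicness only depends on the set of classes), and $A^0$ is minimal by construction. This also establishes the ``moreover'' assertion. For uniqueness, note that for any basic subset $B$, the classes $\{[D_\rho] : \rho \in B\}$ coincide with the set of irreducible elements of $\Gamma(B)$; hence any two basic subsets with the same associated monoid have the same set of classes, and minimality then forces each element to be the $\prec$-minimum ray of its class, determining the subset uniquely.

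For part~(2), since $\{[D_\rho]:\rho\in A\}=\{[D_\rho]:\rho\in A^0\}$ by~(1), Lemma~\ref{lem3} yields the equivalence directly. For part~(3), the crucial point is that minimality makes the second clause in the definition of $\widehat{A^0}$ vacuous: if $[D_{\rho'}] = [D_\rho]$ for some $\rho \in A^0$, then minimality of $A^0$ gives $\rho \prec \rho'$ or $\rho' = \rho$; in the first case $\rho' \not\prec \rho$ because $\prec$ is a strict partial order, and in the second case $\rho' \in A^0$ so $\rho' \notin \widehat{A^0}$ (the sets $A^0$ and $\widehat{A^0}$ being disjoint). Hence $\widehat{A^0} = \{\rho' : [D_{\rho'}] \notin \Gamma(A^0)\} = \{\rho' : [D_{\rho'}] \notin \Gamma(A)\}$, and this set is automatically contained in $\widehat{A}$ by the first clause in the definition of $\widehat{A}$. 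Finally, part~(4) is a direct consequence: by Lemma~\ref{lem4}(2), $Z_A \subseteq \widehat{X}$ means $\widehat{A} \subseteq \sigma(1)$ for some $\sigma \in \Sigma$, and then $\widehat{A^0} \subseteq \widehat{A} \subseteq \sigma(1)$ gives $Z_{A^0} \subseteq \widehat{X}$. No step poses a serious obstacle; the only subtlety is bookkeeping the strictness of $\prec$ and the existence of a unique $\prec$-minimum in each class group.
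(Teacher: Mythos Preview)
Your proposal is correct and follows essentially the same approach as the paper. The paper's proof is extremely terse---for part~(1) it only notes that $\{[D_\rho]:\rho\in A\}$ is the set of irreducible elements of $\Gamma(A)$ and that $A^0$ is determined by this set, while parts~(2)--(4) are dismissed in one line each---whereas you spell out explicitly why each ray class contains a unique $\prec$-minimum, why basicness forces distinct classes, and why minimality makes the second clause in $\widehat{A^0}$ vacuous; but the underlying ideas (irreducible elements determine the class set, then invoke Lemma~\ref{lem3} and Lemma~\ref{lem4}(2)) are identical.
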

\begin{proof}
    
(1) The set $\{[D_{\rho}]\mid \rho \in A\}$ it the set of all irreducible elements in the semigroup~$\Gamma(A).$ The set $A^0$ is uniquely determined by this set. 

(2) Follows from Lemma \ref{lem3}.

(3) Trivially.

(4) Follows from point (2) of Lemma \ref{lem4}.

\end{proof}
For a cone $\sigma \in \Sigma$ we denote by $\Gamma(\sigma)$ the submonoid in $\Cl(X)$ generated by the set $\{[D_{\rho}\mid \rho \notin \sigma \}$.  We say that an abelian finitely generated monoid $\Gamma$ is \emph{free} if it is isomorphic to the monoid $(\BZ_{\geq 0})^s$ for some $s$. If $\Gamma$ is a finitely generated submonoid in $(\BZ_{\geq 0})^d$ then $\Gamma$ is free if and only if the set of irreducible elements in $\Gamma$ is finite and $\BQ$-linear independent.

Now we are ready to prove the main result. 

\begin{proof} (Proof of Theorem \ref{mainth}.)

    Suppose a maximal unipotent subgroup acts on $X$ with a finite number of orbits. Then $X$ is radiant. Consider a cone $\sigma \in \Sigma. $ There is a minimal basic subset $A$ such that the set $\{ [D_{\rho}] \mid \rho \in A\}$ is the set of all irreducible elements in $\Gamma(\sigma).$ Then $\Gamma(\sigma) = \Gamma(A)$ and $\widehat{A} \subseteq \sigma(1).$  By Proposition \ref{mprop} the set $\{[D_{\rho}] \mid \rho \in A\}$ is $\BQ$-linear independent. Then $\Gamma(\sigma)$ is freely generated by $\{[D_{\rho}] \mid \rho \in A\}$.

    Now suppose that $X$ is radiant and all monoids $\Gamma(\sigma)$ are free. Let $A$ be a basic subset with $\widehat{A} \subseteq \sigma(1)$ for some $\sigma \in \Sigma.$ Let us consider the minimal basic subset $A^0$ with $\Gamma(A^0) = \Gamma(A).$ Then $\widehat{A^0} \subseteq \widehat{A} \subseteq \sigma(1).$ Since $\sigma$ is simplicial then there is a cone $\tau\prec \sigma$ with $\tau(1) = \widehat{A^0}. $ Then, $\Gamma(A) =   \Gamma(A^0) = \Gamma(\tau).$ Therefore, $\Gamma(A)$ is free. The set $\{[D_{\rho}] \mid \rho \in A \}$ is the set of irreducible elements in $\Gamma(A)$. Then the set $\{ [D_{\rho}] \mid \rho \in A\}$ is $\BQ$-linear independent. By Proposition \ref{mprop} a maximal unipotent subgroup in $\Aut(X)$ acts on $X$ with a finite number of orbits.

\end{proof}

\section{Examples}\label{exampsec}

Here we give examples of toric varieties on which a unipotent group acts with a finite number of orbits. 

\begin{proposition}\label{projspace}
    Let $X$ be a complete toric variety with $\Cl(X) \simeq \BZ.$ Then a maximal unipotent subgroup in $\Aut(X)$ acts on $X$ with a finite number of orbits if and only if $X$ is a weighted projective space $\BP(1, 1, d_2, \ldots, d_n)$ with $d_i \mid d_{i+1}.$
\end{proposition}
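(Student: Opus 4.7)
The plan is to apply Theorem~\ref{mainth} directly to the concrete description of a complete toric variety with $\Cl(X)\simeq\BZ$ as a well-formed weighted projective space. Such an $X$ is isomorphic to some $\BP(b_0,\ldots,b_n)$ whose fan $\Sigma$ has $n+1$ primitive rays $v_0,\ldots,v_n$ satisfying the single relation $\sum_i b_i v_i=0$ with $b_i>0$, whose cones are $\cone(v_i:i\in I)$ for all proper subsets $I\subsetneq\{0,\ldots,n\}$, and for which $[D_{\rho_i}]=b_i$ under $\Cl(X)\simeq\BZ$. Well-formedness means $\gcd(b_i:i\neq j)=1$ for every $j$, and is equivalent to $\Cl(X)$ being torsion-free, i.e.\ to the primitive vectors $v_0,\ldots,v_n$ generating $N$ over $\BZ$.

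Next I would analyze the two conditions of Theorem~\ref{mainth} in turn. For the radiant condition, the fan is bilateral iff some $n$ of the primitives form a $\BZ$-basis of $N$; taking any $n$-subset $\{v_i:i\neq j\}$, which is automatically $\BQ$-linearly independent, these span $N$ over $\BZ$ iff $v_j\in\langle v_i:i\neq j\rangle_\BZ$, and solving $v_j=-\sum_{i\neq j}(b_i/b_j)v_i$ this happens iff $b_j\mid b_i$ for all $i\neq j$; combined with well-formedness this forces $b_j=1$. Hence $X$ is radiant iff some weight equals $1$. For the freeness condition, each $\Gamma(\sigma)$ is a finitely generated submonoid of $\BZ_{\geq 0}$, hence free iff cyclic (any two elements of $\BZ$ are $\BQ$-linearly dependent). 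As $\sigma$ varies, the set $\{[D_\rho]:\rho\notin\sigma(1)\}$ ranges over all non-empty subsets $\{b_j:j\in J\}$ of the weights, and cyclicity of the submonoid they generate amounts to $\min_{j\in J}b_j$ dividing every $b_j$ for $j\in J$. Taking $|J|=2$ shows that the weights are totally ordered by divisibility; conversely, a divisibility chain makes every $J$ work. Thus, after sorting $b_0\le\cdots\le b_n$, the condition is $b_0\mid b_1\mid\cdots\mid b_n$.

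Combining the two conditions, if the maximal unipotent acts with finitely many orbits then (sorted) $b_0=1$ and $b_0\mid b_1\mid\cdots\mid b_n$, and removing index $0$ in the well-formedness condition gives $1=\gcd(b_1,\ldots,b_n)=b_1$, so $X\cong\BP(1,1,d_2,\ldots,d_n)$ with $d_i:=b_i$ satisfying $d_i\mid d_{i+1}$. The converse is immediate: $\BP(1,1,d_2,\ldots,d_n)$ with $d_i\mid d_{i+1}$ is well-formed, has a weight equal to $1$ (hence is radiant), and has sorted weights forming a divisibility chain (hence all $\Gamma(\sigma)$ are free), so Theorem~\ref{mainth} applies. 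The delicate point deserving attention is the appearance of the \emph{second} $1$: without invoking well-formedness in tandem with the divisibility chain, the radiant condition alone would yield only one unit weight.
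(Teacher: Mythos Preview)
Your proof is correct and follows essentially the same approach as the paper: apply Theorem~\ref{mainth} and translate its two hypotheses (radiant, all $\Gamma(\sigma)$ free) into conditions on the weights of a weighted projective space. The only differences are in execution: you derive the radiant condition directly from the bilateral-fan definition, whereas the paper invokes \cite[Proposition~2]{AR} to get one weight equal to~$1$; and you test freeness via all (in particular two-element) subsets of weights, whereas the paper checks the specific chain of cones $\sigma_i=\cone(\tau,\varepsilon_1,\ldots,\varepsilon_{i-1})$---both routes yield the same divisibility chain, and both then use coprimality (your well-formedness for the index~$0$, the paper's ``relatively prime'' hypothesis from \cite{AR}) to force the second weight equal to~$1$.
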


\begin{proof}
    By \cite[Proposition 2]{AR} the variety $X$ is a complete radiant toric variety with $\Cl(X) \simeq \BZ$ if and only if  $X$ is a weighted projective space $\BP(1, d_1, \ldots, d_n)$ where $d_1,\ldots, d_n$ are relatively prime. Let us denote the corresponding fan by $\Sigma$. Then the set $\Sigma(1)$ contains positive rays $\varepsilon_1,\ldots, \varepsilon_n$ and the negative ray $\tau$ which is generated by the vector $n_{\tau} = -(d_1,\ldots, d_n).$ We can assume that $1 \leq d_1\leq d_2 \leq \ldots \leq d_n$. We see that $\Sigma$ is simplicial.

    Then $[D_{\varepsilon_i}] = d_i[D_{\tau}].$ We consider the cone $\sigma_i\in \Sigma$  which is generated by rays $\tau, \varepsilon_1, \ldots, \varepsilon_{i-1}$. Hence, $\Gamma(\sigma_i)$ is generated by the set $[D_{\varepsilon_i}], \ldots, [D_{\varepsilon_n}].$ So $\Gamma(\sigma_i)$ is isomorphic to the submonoid in $\mathbb{Z}_{\geq 0}$ which is generated by the numbers $d_i, \ldots, d_n.$ Therefore, $\Gamma(\sigma_i)$ is free if and only if $d_i$ divides $ d_{i+1},\ldots d_n.$ So if all monoids $\Gamma(\sigma)$ are free then $d_1 \mid d_2 \mid \ldots \mid d_n$. Since $d_1,\ldots, d_n$ are relatively prime we obtain that $d_1 = 1.$

    It is easy to see that if $d_1 = 1$ and $d_2\mid d_3 \mid \ldots \mid d_n$ then all monoids $\Gamma(\sigma)$ are free for all $\sigma \in \Sigma.$
\end{proof}

Now we will consider the two-dimensional case.

\begin{proposition}\label{dim2}
    Let $X$ be a complete toric surface. Then a maximal unipotent subgroup in $\Aut(X)$ acts on $X$ with a finite number of orbits if and only if one of the following holds:
    \begin{enumerate}
        \item $X$ is a weighted projective space $\BP(1, 1, d);$
        \item $X$ is $\BP^1 \times \BP^1;$
        \item $X$ is a Hirzebruch surface $\BF_d.$
    \end{enumerate}
\end{proposition}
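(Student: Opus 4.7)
\textbf{Plan for Proposition~\ref{dim2}.}

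My plan is to apply Theorem~\ref{mainth}, observing that every complete toric surface is automatically simplicial since each 2-dimensional cone has at most 2 rays. The proposition then reduces to classifying the complete radiant toric surfaces for which every monoid $\Gamma(\sigma)$, $\sigma \in \Sigma$, is free. Assume $X$ is such. Since $\Sigma$ is bilateral, I pick lattice coordinates so that the two positive rays are $\varepsilon_1 = (1, 0), \varepsilon_2 = (0, 1)$ and label the $k \geq 1$ negative rays as $\tau_j = (-r_j, -s_j)$ with $r_j, s_j \geq 0$ in counterclockwise cyclic order starting from $\varepsilon_2$, so that $\varepsilon_1, \varepsilon_2, \tau_k, \tau_{k-1}, \dots, \tau_1$ is the complete cyclic sequence. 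Then $\Cl(X) \simeq \BZ^k$ is freely generated by the $[D_{\tau_j}]$, and in this basis $[D_{\varepsilon_1}] = (r_1, \dots, r_k)$, $[D_{\varepsilon_2}] = (s_1, \dots, s_k)$. If $k = 1$, Proposition~\ref{projspace} gives $X \simeq \BP(1, 1, d)$, which is case~(1).

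For $k = 2$, fan combinatorics forces $r_2, s_1 \geq 1$, and the cyclic order yields $s_1 r_2 - s_2 r_1 > 0$. In the rank-two monoid $\Gamma(\tau_1) = \langle (r_1, r_2), (s_1, s_2), (0, 1) \rangle$, the generators $(0, 1)$ and $(s_1, s_2)$ are always irreducible (the latter by a short calculation ruling out any nontrivial decomposition using the cyclic-order inequality), so freeness of $\Gamma(\tau_1)$ forces either $(r_1, r_2)$ to be reducible or $(r_1, r_2)$ to coincide with an existing generator. Combined with primitivity of $\tau_1$, this is easily shown to be equivalent to $s_1 = 1$. Symmetrically $\Gamma(\tau_2)$ free yields $r_2 = 1$, and the cyclic inequality becomes $r_1 s_2 < 1$, so either $r_1 = 0$ or $s_2 = 0$. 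A case-by-case inspection of the resulting rays then yields $\BP^1 \times \BP^1$ (when both parameters vanish) and otherwise the Hirzebruch surface $\BF_d$ for the nonzero parameter $d$ (the two remaining cases being related by the coordinate-swap lattice automorphism).

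For $k \geq 3$, I pick any middle index $i$ with $2 \leq i \leq k - 1$; the angular position of $\tau_i$ lies strictly between those of its two neighbors inside the closed third quadrant, so $r_i, s_i \geq 1$. In $\Gamma(\tau_i)$ the $k - 1$ basis vectors $[D_{\tau_j}], j \neq i$, are irreducible. I will show that $[D_{\varepsilon_1}]$ is also irreducible: for a hypothetical nontrivial decomposition $[D_{\varepsilon_1}] = g_1 + g_2$ in $\Gamma(\tau_i)$, the $i$-th coordinate equation $\alpha r_i + \beta s_i = r_i$ (where $\alpha, \beta$ are the total $[D_{\varepsilon_1}], [D_{\varepsilon_2}]$-coefficients in $g_1 + g_2$) forces either the trivial case $(\alpha, \beta) = (1, 0)$ or $(\alpha, \beta) = (0, r_i/s_i)$; in the latter case, inspecting the first coordinate yields the coefficient $\gamma_1 = (r_1 s_i - r_i s_1)/s_i$ on $[D_{\tau_1}]$, and the cyclic-order inequality (which implies $r_1 s_i < s_1 r_i$, using $s_1 \geq 1$) makes $\gamma_1 < 0$, a contradiction. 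A symmetric argument using $\tau_k$ (with $s_k r_i < s_i r_k$ and $r_k \geq 1$) shows $[D_{\varepsilon_2}]$ is also irreducible. These $k + 1$ irreducibles are pairwise distinct, since $r_i, s_i \geq 1$ rules out $[D_{\varepsilon_j}] = [D_{\tau_\ell}]$ for any $\ell$, and $[D_{\varepsilon_1}] = [D_{\varepsilon_2}]$ would place all $\tau_j$ on the diagonal $x = y$, impossible for $k \geq 2$ distinct primitive rays. So $\Gamma(\tau_i)$ has $k + 1$ irreducibles in the $k$-dimensional space $\Cl(X)_{\BQ}$, contradicting freeness.

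The converse direction is a short verification that each of $\BP(1, 1, d)$, $\BP^1 \times \BP^1$, $\BF_d$ is radiant and has all monoids $\Gamma(\sigma)$ free; for the weighted projective space this is covered by Proposition~\ref{projspace}, and for the remaining two surfaces it is a finite computation over the cones. The main technical obstacle is the reducibility analysis in the $k \geq 3$ step, where the cyclic ordering of the negative rays has to be combined with the divisibility constraint $s_i \mid r_i$ forced by the $i$-th coordinate equation to derive a sign contradiction; once this is in hand, the count-of-irreducibles argument is immediate.
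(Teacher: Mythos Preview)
Your proposal is correct. It differs from the paper's proof in its organization: the paper does not split into cases by $k$, but instead first isolates the situation where no negative ray lies in the open third quadrant (forcing $\BP^1\times\BP^1$), and then, assuming some $n_i$ has both coordinates nonzero, analyzes $\Gamma(\tau_i)$ to produce a relation $[D_{\varepsilon_2}] = d[D_{\varepsilon_1}] + \sum_{j\neq i} b_j[D_{\tau_j}]$; a second such index $s$ is then shown to give the same slope $d$, forcing $n_i=n_s$, so there is at most one interior negative ray and it is $-(1,d)$, after which the paper finishes by a short case check ruling out the leftover $k=3$ and $k=2$ configurations. Your argument instead handles $k=1$ via Proposition~\ref{projspace}, treats $k=2$ by directly analyzing the two monoids $\Gamma(\tau_1),\Gamma(\tau_2)$ to pin down $s_1=r_2=1$ and $r_1 s_2=0$, and for $k\geq 3$ exhibits $k+1$ pairwise distinct irreducible elements in $\Gamma(\tau_i)$ for any middle index $i$, contradicting freeness at once. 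The endgame is cleaner your way---the paper's proof leaves the exclusion of the residual $k=2,3$ configurations to an unspecified verification, whereas your $k\geq 3$ count-of-irreducibles argument is self-contained---while the paper's approach has the advantage of extracting the structural relation $[D_{\varepsilon_2}]=d[D_{\varepsilon_1}]+\cdots$ uniformly, without separating small and large $k$.
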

\begin{figure}[h]
\begin{center}
\tikzset{every picture/.style={line width=0.75pt}} 

\begin{tikzpicture}[x=0.75pt,y=0.75pt,yscale=-1,xscale=1]

\draw  [draw opacity=0] (17,42) -- (185.33,42) -- (185.33,211) -- (17,211) -- cycle ; \draw  [color={rgb, 255:red, 0; green, 0; blue, 0 }  ,draw opacity=0.6 ] (17,42) -- (17,211)(45,42) -- (45,211)(73,42) -- (73,211)(101,42) -- (101,211)(129,42) -- (129,211)(157,42) -- (157,211)(185,42) -- (185,211) ; \draw  [color={rgb, 255:red, 0; green, 0; blue, 0 }  ,draw opacity=0.6 ] (17,42) -- (185.33,42)(17,70) -- (185.33,70)(17,98) -- (185.33,98)(17,126) -- (185.33,126)(17,154) -- (185.33,154)(17,182) -- (185.33,182)(17,210) -- (185.33,210) ; \draw  [color={rgb, 255:red, 0; green, 0; blue, 0 }  ,draw opacity=0.6 ]  ;
\draw  [draw opacity=0] (214,41) -- (382.33,41) -- (382.33,210) -- (214,210) -- cycle ; \draw  [color={rgb, 255:red, 0; green, 0; blue, 0 }  ,draw opacity=0.6 ] (214,41) -- (214,210)(242,41) -- (242,210)(270,41) -- (270,210)(298,41) -- (298,210)(326,41) -- (326,210)(354,41) -- (354,210)(382,41) -- (382,210) ; \draw  [color={rgb, 255:red, 0; green, 0; blue, 0 }  ,draw opacity=0.6 ] (214,41) -- (382.33,41)(214,69) -- (382.33,69)(214,97) -- (382.33,97)(214,125) -- (382.33,125)(214,153) -- (382.33,153)(214,181) -- (382.33,181)(214,209) -- (382.33,209) ; \draw  [color={rgb, 255:red, 0; green, 0; blue, 0 }  ,draw opacity=0.6 ]  ;
\draw  [draw opacity=0] (414,40) -- (582.33,40) -- (582.33,209) -- (414,209) -- cycle ; \draw  [color={rgb, 255:red, 0; green, 0; blue, 0 }  ,draw opacity=0.6 ] (414,40) -- (414,209)(442,40) -- (442,209)(470,40) -- (470,209)(498,40) -- (498,209)(526,40) -- (526,209)(554,40) -- (554,209)(582,40) -- (582,209) ; \draw  [color={rgb, 255:red, 0; green, 0; blue, 0 }  ,draw opacity=0.6 ] (414,40) -- (582.33,40)(414,68) -- (582.33,68)(414,96) -- (582.33,96)(414,124) -- (582.33,124)(414,152) -- (582.33,152)(414,180) -- (582.33,180)(414,208) -- (582.33,208) ; \draw  [color={rgb, 255:red, 0; green, 0; blue, 0 }  ,draw opacity=0.6 ]  ;
\draw [line width=1.5]    (100,125) -- (101,42) ;
\draw [line width=1.5]    (101,126) -- (185,126) ;
\draw [line width=1.5]    (101,126) -- (58.33,209) ;
\draw [line width=1.5]    (101,126) -- (126,126) ;
\draw [shift={(129,126)}, rotate = 180] [color={rgb, 255:red, 0; green, 0; blue, 0 }  ][line width=1.5]    (14.21,-4.28) .. controls (9.04,-1.82) and (4.3,-0.39) .. (0,0) .. controls (4.3,0.39) and (9.04,1.82) .. (14.21,4.28)   ;
\draw [line width=1.5]    (101,126) -- (101,101) ;
\draw [shift={(101,98)}, rotate = 90] [color={rgb, 255:red, 0; green, 0; blue, 0 }  ][line width=1.5]    (14.21,-4.28) .. controls (9.04,-1.82) and (4.3,-0.39) .. (0,0) .. controls (4.3,0.39) and (9.04,1.82) .. (14.21,4.28)   ;
\draw [line width=1.5]    (101,126) -- (74.34,179.32) ;
\draw [shift={(73,182)}, rotate = 296.57] [color={rgb, 255:red, 0; green, 0; blue, 0 }  ][line width=1.5]    (14.21,-4.28) .. controls (9.04,-1.82) and (4.3,-0.39) .. (0,0) .. controls (4.3,0.39) and (9.04,1.82) .. (14.21,4.28)   ;
\draw [line width=1.5]    (298,125) -- (382,125) ;
\draw [line width=1.5]    (298,41) -- (298,125) ;
\draw [line width=1.5]    (298,209) -- (298,125) ;
\draw [line width=1.5]    (214,125) -- (298,125) ;
\draw [line width=1.5]    (298,125) -- (323,125) ;
\draw [shift={(326,125)}, rotate = 180] [color={rgb, 255:red, 0; green, 0; blue, 0 }  ][line width=1.5]    (14.21,-4.28) .. controls (9.04,-1.82) and (4.3,-0.39) .. (0,0) .. controls (4.3,0.39) and (9.04,1.82) .. (14.21,4.28)   ;
\draw [line width=1.5]    (298,125) -- (298,100) ;
\draw [shift={(298,97)}, rotate = 90] [color={rgb, 255:red, 0; green, 0; blue, 0 }  ][line width=1.5]    (14.21,-4.28) .. controls (9.04,-1.82) and (4.3,-0.39) .. (0,0) .. controls (4.3,0.39) and (9.04,1.82) .. (14.21,4.28)   ;
\draw [line width=1.5]    (298,125) -- (298,150) ;
\draw [shift={(298,153)}, rotate = 270] [color={rgb, 255:red, 0; green, 0; blue, 0 }  ][line width=1.5]    (14.21,-4.28) .. controls (9.04,-1.82) and (4.3,-0.39) .. (0,0) .. controls (4.3,0.39) and (9.04,1.82) .. (14.21,4.28)   ;
\draw [line width=1.5]    (298,125) -- (273,125) ;
\draw [shift={(270,125)}, rotate = 360] [color={rgb, 255:red, 0; green, 0; blue, 0 }  ][line width=1.5]    (14.21,-4.28) .. controls (9.04,-1.82) and (4.3,-0.39) .. (0,0) .. controls (4.3,0.39) and (9.04,1.82) .. (14.21,4.28)   ;
\draw [line width=1.5]    (498,124) -- (582,124) ;
\draw [line width=1.5]    (498,40) -- (498,124) ;
\draw [line width=1.5]    (498,208) -- (498,124) ;
\draw [line width=1.5]    (455.33,208) -- (498,124) ;
\draw [line width=1.5]    (498,124) -- (523,124) ;
\draw [shift={(526,124)}, rotate = 180] [color={rgb, 255:red, 0; green, 0; blue, 0 }  ][line width=1.5]    (14.21,-4.28) .. controls (9.04,-1.82) and (4.3,-0.39) .. (0,0) .. controls (4.3,0.39) and (9.04,1.82) .. (14.21,4.28)   ;
\draw [line width=1.5]    (498,124) -- (498,99) ;
\draw [shift={(498,96)}, rotate = 90] [color={rgb, 255:red, 0; green, 0; blue, 0 }  ][line width=1.5]    (14.21,-4.28) .. controls (9.04,-1.82) and (4.3,-0.39) .. (0,0) .. controls (4.3,0.39) and (9.04,1.82) .. (14.21,4.28)   ;
\draw [line width=1.5]    (498,124) -- (498,149) ;
\draw [shift={(498,152)}, rotate = 270] [color={rgb, 255:red, 0; green, 0; blue, 0 }  ][line width=1.5]    (14.21,-4.28) .. controls (9.04,-1.82) and (4.3,-0.39) .. (0,0) .. controls (4.3,0.39) and (9.04,1.82) .. (14.21,4.28)   ;
\draw [line width=1.5]    (498,124) -- (471.34,177.32) ;
\draw [shift={(470,180)}, rotate = 296.57] [color={rgb, 255:red, 0; green, 0; blue, 0 }  ][line width=1.5]    (14.21,-4.28) .. controls (9.04,-1.82) and (4.3,-0.39) .. (0,0) .. controls (4.3,0.39) and (9.04,1.82) .. (14.21,4.28)   ;

\draw (103,129.4) node [anchor=north west][inner sep=0.75pt]    {$e_{1}$};
\draw (75,101.4) node [anchor=north west][inner sep=0.75pt]    {$e_{2}$};
\draw (75,185.4) node [anchor=north west][inner sep=0.75pt]    {$n_{1}( -1,-d)$};
\draw (314,128.4) node [anchor=north west][inner sep=0.75pt]    {$e_{1}$};
\draw (278,94.4) node [anchor=north west][inner sep=0.75pt]    {$e_{2}$};
\draw (528,127.4) node [anchor=north west][inner sep=0.75pt]    {$e_{1}$};
\draw (476,90.4) node [anchor=north west][inner sep=0.75pt]    {$e_{2}$};
\draw (300,156.4) node [anchor=north west][inner sep=0.75pt]    {$n_{2}$};
\draw (272,128.4) node [anchor=north west][inner sep=0.75pt]    {$n_{1}$};
\draw (500,141.4) node [anchor=north west][inner sep=0.75pt]    {$n_{2}$};
\draw (472,183.4) node [anchor=north west][inner sep=0.75pt]    {$n_{1}( -1,-d)$};
\draw (45,179.4) node [anchor=north west][inner sep=0.75pt]    {$\tau _{1}$};
\draw (216,128.4) node [anchor=north west][inner sep=0.75pt]    {$\tau _{1}$};
\draw (438,188.4) node [anchor=north west][inner sep=0.75pt]    {$\tau _{1}$};
\draw (300,184.4) node [anchor=north west][inner sep=0.75pt]    {$\tau _{2}$};
\draw (500,161.4) node [anchor=north west][inner sep=0.75pt]    {$\tau _{2}$};
\draw (159,129.4) node [anchor=north west][inner sep=0.75pt]    {$ \begin{array}{l}
\varepsilon _{1}\\
\end{array}$};
\draw (558,122.4) node [anchor=north west][inner sep=0.75pt]    {$ \begin{array}{l}
\varepsilon _{1}\\
\end{array}$};
\draw (103,45.4) node [anchor=north west][inner sep=0.75pt]    {$ \begin{array}{l}
\varepsilon _{2}\\
\end{array}$};
\draw (300,44.4) node [anchor=north west][inner sep=0.75pt]    {$ \begin{array}{l}
\varepsilon _{2}\\
\end{array}$};
\draw (500,43.4) node [anchor=north west][inner sep=0.75pt]    {$ \begin{array}{l}
\varepsilon _{2}\\
\end{array}$};

\end{tikzpicture}
\end{center}
\caption{The fans of $\BP(1, 1, d),\ \BP^1\times \BP^1$ and $\BF_d$.}\label{Fig4}
\end{figure}
\begin{proof}
    Let $X$ be a complete radiant toric surface. Suppose that a maximal unipotent subgroup in $\Aut(X)$ acts on $X$ with a finite number of orbits. Let $\Sigma$ be the corresponding fan. Then $\Sigma$ is simplicial. We denote by $\varepsilon_1, \varepsilon_2$ the positive rays and by $\tau_1, \ldots, \tau_k$ we denote the negative rays. Let $n_1, \ldots, n_k$ be the primitive vectors on the rays $\tau_1,\ldots, \tau_k$. We have $n_i = -(a_{1i}, a_{2i})$ for some relatively prime integers $a_{1i},a_{2i} \in \BZ_{\geq 0}.$ 
    
    Then $[D_{\tau_1}],\ldots, [D_{\tau_k}]$ is a basis of $\Cl(X) \simeq \BZ^k$ and 
    $$[D_{\varepsilon_{s}}] = \sum_i a_{si}[D_{\tau_i}]$$
    where $s = 1,2$. 

    Suppose there is no $i$ such that $a_{1i}$ and $a_{2i}$ are not equal to zero. Then $\Sigma$ is the fan of $\BP^1\times \BP^1.$

    Now suppose that there is $i$ such that $a_{1i}$ and $a_{2i}$ are not equal to zero. Consider the cone $\sigma = \tau_i.$ The set $\{[D_{\tau_j}]\mid j\neq i \} $, together with $[D_{\varepsilon_1}]$ and $[D_{\varepsilon_2}]$, generates $\Gamma(\sigma).$ The elements $\{[D_{\tau_j}]\mid j\neq i \} $ are irreducible in $\Gamma(\sigma)$ but $[D_{\varepsilon_1}]$ and $[D_{\varepsilon_2}]$ do not belong to the monoid generated by $\{[D_{\tau_j}]\mid j\neq i \}$. Then $[D_{\varepsilon_1}]$ or $[D_{\varepsilon_2}]$ is also irreducible in $\Gamma(\sigma).$ We will assume that it is $[D_{\varepsilon_1}].$ Since $\Gamma(\sigma)$ is a free monoid then the set of irreducible elements in $\Gamma(\sigma)$ is $\BQ$-linearly independent. Therefore, there are at most $k$ irreducible elements in $\Gamma(\sigma).$ So $[D_{\varepsilon_2}]$ belongs to the monoid generated by $[D_{\tau_j}]$ with $j\neq i$ and $[D_{\varepsilon_1}].$ So we have
    $$ 
    [D_{\varepsilon_2}] = d[D_{\varepsilon_1}] + \sum_{j\neq i}b_{j}[D_{\tau_j}]
    $$
    for $d, b_j \in \BZ_{\geq 0}.$ Let us also note that $d>0$, $a_{2j}  = da_{1j} + b_j$ for all $ j\neq i$  and $a_{2i} = da_{1i}.$ 
    
    Suppose we have another $s \neq i$ such that both $a_{1s}$ and $a_{2s}$ are not equal to zero. In the same way we get that either $[D_{\varepsilon_1}]$ or $[D_{\varepsilon_2}]$ is irreducible. All coordinates of $[D_{\varepsilon_2}]$ are greater or equal to the coordinates of $[D_{\varepsilon_1}]$ in the basis $[D_{\tau_1}], \ldots, [D_{\tau_k}].$ Hence, again $[D_{\varepsilon_1}]$ is irreducible. So we  obtain 
    $$[D_{\varepsilon_2}] = d'[D_{\varepsilon_1}] + \sum_{j\neq s}c_{j}[D_{\tau_j}]$$
    for $d', c_j \in \BZ_{\geq 0}.$ Therefore, $a_{2i} = da_{1i} = d'a_{1i} + c_i$ and $d \geq d'.$ But in the same way we will get that $d' \geq d.$ So $d = d'.$ Then 
    $$d = \frac{a_{1i}}{a_{2i}} = d' = \frac{a_{1s}}{a_{2s}}.$$
    Hence, the vectors $n_i$ and $n_s$ are proportional. Since they are primitive we get that $n_i = n_s$ and $i = s.$ It means that there is only one vector $n_i$ for which both coordinates are non-zero. For this vector we have $a_{2i} = da_{1i}$. The coordinates of vector $n_i$ are relatively prime, therefore $a_{1i} = 1$ and $a_{2i} = d.$ Further, we will assume that $i = 1.$

    If $k = 3$ and $n_2 = -(1,0), n_3 = -(0,1)$ or $k = 2, n_2 = -(1,0)$ and $d > 1$ then one can check that the monoid $\Gamma(\tau_1)$ is not free. In other cases the fan $\Sigma$ is the fan of weighted projective space $\BP(1,1,d)$ or Hirzebruch surface $\BF_d.$

    Direct check shows that all three types of varieties satisfy the condition of Theorem \ref{mainth}.
    
\end{proof}

    We now describe the orbits of the maximal unipotent group in the automorphism group for the varieties considered in this section.

    Let $X$ be a complete simpilicial toric variety. We will use the notations from Section \ref{prelim}.

    \begin{lemma}\label{lem6}
        Let $\sigma$ be a cone in $\Sigma$ and $O_{\sigma}$ the corresponding orbit of the torus $T$ in $X$. Then $\pi^{-1}(O_{\sigma}) = \{(x_{\rho}) \in \BA^d \mid x_{\rho} = 0 \iff \rho \in \sigma(1)\} \subseteq \widehat{X}.$ 
    \end{lemma}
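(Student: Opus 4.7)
The plan is to combine three standard ingredients from the Cox construction: the identity $\pi^{-1}(X_\tau) = V_\tau$, the orbit-cone correspondence $X_\tau = \bigsqcup_{\sigma' \preceq \tau} O_{\sigma'}$, and the simpliciality of $\sigma$. Write $S = \{(x_\rho) \in \BA^d \mid x_\rho = 0 \iff \rho \in \sigma(1)\}$ for the claimed set, and observe at once that $S \subseteq V_\sigma \subseteq \widehat{X}$ since every coordinate $x_\rho$ with $\rho \notin \sigma(1)$ is nonzero on $S$.

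To prove $\pi(S) \subseteq O_\sigma$, I first note $\pi(S) \subseteq X_\sigma = O_\sigma \sqcup \bigsqcup_{\tau \prec \sigma} O_\tau$. For any proper face $\tau \prec \sigma$ there is a ray $\rho \in \sigma(1) \setminus \tau(1)$, and $x_\rho$ vanishes identically on $S$; hence $S \cap V_\tau = \emptyset$, so $\pi(S) \cap X_\tau = \emptyset$, and the only surviving piece of $X_\sigma$ is $O_\sigma$. This shows $S \subseteq \pi^{-1}(O_\sigma)$.

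For the reverse inclusion $\pi^{-1}(O_\sigma) \subseteq S$, take $p \in \pi^{-1}(O_\sigma)$. Since $O_\sigma \subset X_\sigma$, $p \in V_\sigma$, giving $x_\rho(p) \neq 0$ for all $\rho \notin \sigma(1)$. It remains to show that $x_\rho(p) = 0$ for every $\rho \in \sigma(1)$. Suppose instead that some $\rho_0 \in \sigma(1)$ satisfies $x_{\rho_0}(p) \neq 0$. Because $\sigma$ is simplicial, the subset $\sigma(1) \setminus \{\rho_0\}$ is the ray set of a codimension-one face $\tau \prec \sigma$. For every $\rho \notin \tau(1)$, either $\rho = \rho_0$ (and $x_{\rho_0}(p) \neq 0$ by assumption) or $\rho \notin \sigma(1)$ (and $x_\rho(p) \neq 0$ since $p \in V_\sigma$); hence $p \in V_\tau$, so $\pi(p) \in X_\tau$, contradicting $\pi(p) \in O_\sigma$ and $\sigma \not\preceq \tau$.

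The argument is essentially a matter of unwinding the Cox-quotient description, and the only subtle step is the final contradiction in the third paragraph, which crucially uses simpliciality of $\sigma$ to produce the face $\tau$ spanned by $\sigma(1) \setminus \{\rho_0\}$; without simpliciality, such a subset of rays need not span a cone of $\Sigma$ and the contradiction would fail.
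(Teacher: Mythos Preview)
Your proof is correct. It is, however, genuinely different in style from the paper's argument. The paper works algebraically: it identifies the ideal $I(O_\sigma)$ inside $\BK[X_\sigma]$ in terms of characters $\chi^m$ with $m \in \sigma^\vee \setminus \tau$, pushes this through the explicit comorphism $\pi_\sigma^*(\chi^m) = \prod_\rho x_\rho^{\langle m, n_\rho\rangle}$, and then shows that the radical of the resulting ideal in $\BK[V_\sigma]$ is exactly $(x_\rho : \rho \in \sigma(1))$; simpliciality enters when one produces, for each $\rho_0 \in \sigma(1)$, an element $m'$ of the face of $\sigma^\vee$ dual to $\rho_0$ to force $x_{\rho_0}$ into the ideal. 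You instead argue purely set-theoretically, combining the identity $\pi^{-1}(X_\tau) = V_\tau$ with the orbit decomposition $X_\sigma = \bigsqcup_{\tau \preceq \sigma} O_\tau$, and invoke simpliciality only to guarantee that $\sigma(1) \setminus \{\rho_0\}$ spans an actual face of $\sigma$. Your route is shorter and sidesteps any ring computations; the paper's route is more self-contained in that it derives $\pi^{-1}(X_\sigma) = V_\sigma$ from the geometric-quotient property at the end rather than taking it as a black-box input at the outset, and it makes the vanishing of each $x_{\rho_0}$ explicit via a concrete monomial witness.
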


    \begin{proof}
        The cone $\sigma$ corresponds to the open affine $T$-invariant chart $X_{\sigma} \subseteq X$ and $O_{\sigma}$ is the unique closed $T$-orbit in $X_{\sigma}$. Let $\sigma^{\vee}$ be the dual cone to $\sigma$ in the vector space $M_{\BQ}.$ Then
        $$\BK[X_{\sigma}] = \bigoplus_{m\in \sigma^{\vee}\cap M} \langle\chi^m\rangle,$$
        where $\chi^m \in \BK[T]$ is the character corresponding to $m$. The orbit $O_{\sigma}$ corresponds to the face $\tau$ of $\sigma^{\vee}$ where 
        $$\tau = \{m\in M_{\BQ}\cap \sigma^{\vee} \mid \langle m, n_{\rho}\rangle = 0 \ \forall \ \rho \in \sigma(1)  \}.$$
        The ideal corresponding to $O_{\sigma}$ in $\BK[X_{\sigma}]$ is 
        $$I(O_{\sigma}) = \bigoplus_{m\in M\cap(\sigma^{\vee}\setminus \tau)}\langle \chi^m \rangle.$$

        Let $\pi_{\sigma}$ be the restriction of $\pi$ to the subset
        $$V_{\sigma} = \{(x_{\rho_1},\ldots, x_{\rho_d)} \in \mathbb{A}^d \mid x_{\rho} \neq 0\ \forall \rho \notin \sigma(1)\}.$$
        Then the image of $\pi_{\sigma}$ is $X_{\sigma}$ and the dual homomorphism of algebras $\pi_{\sigma}^*: \BK[X_{\sigma}] \to \BK[V_{\sigma}]$ is given by the formula:
        $$\chi^m \to \prod_{\rho \in \Sigma(1)} x_{\rho}^{\langle m, n_{\rho}\rangle}.$$
        See Section 2 in \cite{COX2} for details. Therefore, the ideal $J = I(\pi_{\sigma}^{-1}(O_{\tau}))$ is the radical of the ideal generated by Laurent monomials $\prod_{\rho \in \Sigma(1)} x_{\rho}^{\langle m, n_{\rho}\rangle}$ with $m \in M\cap(\sigma^{\vee}\setminus \tau).$ Each of these monomials is divisible by some $x_{\rho}$ with $\rho \in \sigma(1).$ Therefore, $J$ is contained in the ideal generated by $x_{\rho}$ with $\rho \in \sigma(1).$ 
        
        Since $\sigma$ is simplicial for each $\rho_0 \in \sigma(1)$ there is a face $\eta$ in $\sigma^{\vee}$ such that $\langle m, n_{\rho} \rangle = 0$ for all $m\in \eta, \rho \in \sigma(1)\setminus \{\rho_0\}$ and there is $m'\in \eta$ with $\langle m', n_{\rho_0} \rangle \neq 0.$ Considering this $m'$ we obtain the monomial of the following form:
        $$x_{\rho_0}^{\langle m', n_{\rho_0}\rangle}\prod_{\rho \notin \Sigma(1)} x_{\rho}^{\langle m',  n_{\rho}\rangle}.$$
        The variables $x_{\rho}$ with $\rho \notin \sigma(1)$ are invertible on $V_{\sigma}.$ Hence, the monomial $x_{\rho_0}^{\langle m', n_{\rho_0}\rangle}$ belongs to $J$. Since $J$ is radical $x_{\rho_0} \in J.$ Therefore, $J$ is the ideal generated by $x_{\rho}$ with $\rho \in \sigma(1).$ It implies that
        $$V_{\sigma} \cap \pi^{-1}(O_{\sigma}) = \{(x_{\rho}) \in \BA^d \mid x_{\rho} = 0 \iff \rho \in \sigma(1)\}.$$
        
        Since $\pi$ is geometric quotient and $V_{\sigma}$ is $G_X$-invariant we have $\pi^{-1}(X_{\sigma}) = V_{\sigma}.$ Therefore, 
       $$ \pi^{-1}(O_{\sigma}) = \{(x_{\rho}) \in \BA^d \mid x_{\rho} = 0 \iff \rho \in \sigma(1)\}.$$
    \end{proof}
    
Now suppose $X = \BP(1,1,d_2,\ldots ,d_n)$ with $d_i\mid d_{i+1}.$ Then the set $\Sigma(1)$ consists of the rays:
$$\rho_0 = \tau = \langle (-1,-d_2, \ldots, -d_n)\rangle_{\geq 0}$$
$$\rho_i = \varepsilon_i = \langle (0, \ldots, 1, \ldots, 0)\rangle_{\geq 0}\ \text{for }\ i = 1,\ldots, n.$$
For any proper subset $P\subsetneq \{0,1,\ldots, n\}$ there is a cone $\sigma_P$ with $\sigma_P(1) = \{\rho_a \mid a \in P\}.$

It is not difficult to verify that all nonempty basic subsets in $\Sigma(1)$ contain exactly one element and for any $\rho_i$ the subset $\{\rho_i\}$ is basic. For a basic subset $A_i = \{\rho_i\}$ we have $\widehat{A_i} = \{\rho_j \mid j<i\}.$ Therefore, the corresponding set $Z_{A_i}$ has the following form:
$$Z_{A_i} = \{(x_{\rho_0}, \ldots, x_{\rho_n}) \mid  x_{\rho_0} = \ldots  = x_{\rho_{i-1}} = 0, \ x_{\rho_i} \neq 0 \}.$$
By Lemma \ref{lem3} the set $Z_{A_i}$ is a  $G_X\times U$-orbit and by Lemma \ref{lem4} the subset $Z_{A_i}$ is contained in  $\widehat{X}$. By Lemma \ref{lem6} the set $\pi(Z_{A_i})$ contains $T$-orbits $O_{\sigma}$ for all $\sigma\in \Sigma$ such that  $\rho_0,\ldots, \rho_{i-1}\in \sigma(1)$ and $\rho_i \notin \sigma(1)$. Therefore, we obtain the following proposition.

\begin{proposition}
There are $n+1$ orbits of a maximal unipotent group  on $\BP(1,1,d_2,\ldots ,d_n)$ with $d_i\mid d_{i+1}.$ If we choose the maximal unipotent subgroup $U$ as described in Section \ref{mainsec}, then for each $i = 0\ldots n$ there is an $U$-orbit $O_i$ which is the union of $T$-orbits $O_{\sigma}$ for cones $\sigma \in \Sigma$ with $\rho_0,\ldots, \rho_{i-1}\in \sigma(1)$ and $\rho_i \notin \sigma(1)$. 
\end{proposition}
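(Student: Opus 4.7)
The plan is to combine the general setup from Lemmas \ref{lem3}, \ref{lem4} and \ref{lem6} with the basic-subset analysis already carried out just above the proposition. That analysis has identified the nonempty basic subsets as the singletons $A_i = \{\rho_i\}$ for $i = 0, \ldots, n$, computed $\widehat{A_i} = \{\rho_j \mid j < i\}$, and recorded that each $Z_{A_i}$ is a single $G_X \times U$-orbit contained in $\widehat{X}$.

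First I would show that there are exactly $n+1$ orbits of $U$ on $X$. By Lemma \ref{lem4}(1), the total coordinate space decomposes as $\BA^{n+1} = \bigsqcup_A Z_A$ over all basic subsets. The only remaining basic subset is $A = \emptyset$, for which $\widehat{\emptyset} = \Sigma(1)$ and $Z_\emptyset = \{0\}$; since the irrelevant locus of a weighted projective space is precisely $\{0\}$, we obtain $\widehat{X} = \BA^{n+1} \setminus \{0\} = \bigsqcup_{i=0}^{n} Z_{A_i}$. Because $\pi$ is a geometric quotient by $G_X$ and each $Z_{A_i}$ is $G_X$-invariant, the images $O_i := \pi(Z_{A_i})$ are pairwise disjoint, and as each $Z_{A_i}$ is a single $G_X \times U$-orbit, each $O_i$ is a single $U$-orbit. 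Hence $X = \bigsqcup_{i=0}^{n} O_i$.

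Next I would identify the $T$-orbits making up each $O_i$ via Lemma \ref{lem6}. A torus orbit $O_\sigma$ lies in $O_i$ iff $\pi^{-1}(O_\sigma) \cap Z_{A_i} \neq \emptyset$. Lemma \ref{lem6} identifies $\pi^{-1}(O_\sigma)$ as the set of tuples whose vanishing coordinates are exactly the rays of $\sigma(1)$, while points of $Z_{A_i}$ satisfy $x_{\rho_j} = 0$ for $j < i$, $x_{\rho_i} \neq 0$, with the remaining coordinates free. Matching the two vanishing patterns forces $\{\rho_0, \ldots, \rho_{i-1}\} \subseteq \sigma(1)$ and $\rho_i \notin \sigma(1)$, while placing no constraint on $\sigma(1) \cap \{\rho_{i+1}, \ldots, \rho_n\}$. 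This yields the claimed description of $O_i$.

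I do not anticipate any serious obstacle; the combinatorial heart of the argument (identifying the basic subsets and computing $\widehat{A_i}$) has already been resolved in the paragraph just before the proposition. What remains is a direct assembly of the cited lemmas together with the short zero-pattern bookkeeping at the end.
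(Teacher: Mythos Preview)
Your proposal is correct and follows essentially the same route as the paper: the paper's argument is the paragraph immediately preceding the proposition, which identifies the basic subsets $A_i$, computes $\widehat{A_i}$, and then invokes Lemmas \ref{lem3}, \ref{lem4}, and \ref{lem6} exactly as you do. Your treatment is in fact a bit more explicit than the paper's---you spell out that the empty basic subset contributes only the origin and hence lies in the irrelevant locus, and you carefully match vanishing patterns against Lemma \ref{lem6}---but these are details the paper leaves implicit rather than a different strategy.
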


Similarly, one can find $U$-orbits for Hirzebruch surface $\BF_d$ and $\BP^1\times \BP^1$. In Figure \ref{Fig5}, the cones corresponding to the $T$-orbits lying in the same $U$-orbit are drawn in the same color.

\begin{figure}[h]
\begin{center}
\input{FansOrbits}
\end{center}
\caption{The $U$-orbits on $\BP(1, 1, d),\ \BP^1\times \BP^1$ and $\BF_d$.}\label{Fig5}
\end{figure}


\begin{thebibliography}{99}

\bibitem{APS} I. Arzhantsev, A. Perepechko and K. Shakhmatov. ''Radiant toric varieties and unipotent group actions'', Bull. Sci. Math., 192 (2024), 103418

\bibitem{AR} I. Arzhantsev, E. Romaskevich. ''Additive actions on toric varieties'', Proc. Amer. Math. Soc., 145:5 (2017), 1865-1879

\bibitem{Bazh}
I. Bazhov. ''On orbits of the automorphism group on a complete toric variety'', Beitr. Algebra Geom. 54 (2012), 471-481

\bibitem{BH}
F. Berchtold, J. Hausen. ''Bunches of cones in the divisor class group - a new combinatorial language for toric varieties'', Int. Math. Res. Not. 6 (2004), 261302.

\bibitem{COX}
D. Cox, J. Little, and H. Schenck. Toric Varieties. Grad. Stud. Math. 124, AMS,
Providence, RI, 2011


\bibitem{COX2}
D. Cox. ''The homogeneous coordinate ring of a toric variety", J. Algebraic Geom. 4:1 (1995), 17-50

\bibitem{Dem}

M. Demazure. ''Sous-groupes algebriques de rang maximum du groupe de Cremona'', Ann. Sci. Ecole Norm. Sup. 4:3 (1970), 507-588


\bibitem{Ds2} S. Dzhunusov. ''On uniqueness of additive actions on complete toric varieties'', J. Algebra, 609, (2022), 642-656

\bibitem{FULTON}
W. Fulton. Introduction to toric varieties. Ann. of Math. Stud. 131, Princeton University Press, Princeton, NJ, 1993

\end{thebibliography}
\end{document}